\documentclass[11pt]{amsart}
\usepackage[T1]{fontenc}
\usepackage{lmodern}
\usepackage[a4paper,margin=28mm,headheight=14pt]{geometry}
\usepackage{amsmath,amssymb,mathtools,mathrsfs}
\usepackage{microtype}
\usepackage{needspace}
\usepackage[hidelinks]{hyperref}
\hypersetup{pdftitle={Local integral pinching and planarity estimates for mean curvature flow},pdfauthor={Liang Cheng}}
\allowdisplaybreaks[2]
\numberwithin{equation}{section}
\newtheorem{theorem}{Theorem}[section]
\newtheorem{proposition}[theorem]{Proposition}
\newtheorem{lemma}[theorem]{Lemma}
\newtheorem{corollary}[theorem]{Corollary}
\newtheorem*{langfordtheorem}{Theorem}
\theoremstyle{remark}
\newtheorem{remark}[theorem]{Remark}
\newcommand{\R}{\mathbb R}

\newcommand{\dmu}{\,d\mu_t}
\newcommand{\dmt}{\,d\mu_t\,dt}
\newcommand{\cE}{\mathcal E}
\newcommand{\cC}{\mathfrak C}
\newcommand{\cQ}{\mathcal Q}
\newcommand{\spt}{\operatorname{spt}}
\newcommand{\tr}{\operatorname{tr}}

\newcommand{\Avec}{\mathbf A}
\newcommand{\HH}{\mathbf H}
\newcommand{\Ah}{\widehat{\mathbf A}}
\newcommand{\nablaN}{\nabla^{\perp}}
\title[Local pinching estimates]{Local Convexity and Planarity Estimates for Mean Curvature Flow}
\author{Liang Cheng}
	\address{School of Mathematics and Statistics  $\&$ Key Laboratory of Nonlinear Analysis and Applications (Ministry of Education), Central  China Normal University, Wuhan, 430079, P.R.China}

	\email{chengliang@ccnu.edu.cn }
\subjclass[2020]{53E10, 35B45}
	
\thanks{Liang Cheng's Research partially supported by  
	National Natural Science Foundation of China 12171180}

\keywords{Mean curvature flow, local pinching estimate, parabolic collar, entropy, planarity estimate, ancient solution}

\begin{document}

\begin{abstract}
We establish local convexity and cylindrical pinching estimates for hypersurface mean curvature flow under an intermediate convexity condition and integral bounds on the mean curvature. These estimates hold after a positive waiting time and do not require pointwise upper bounds for the mean curvature on the parabolic boundary. In arbitrary codimension, we also establish local planarity estimates for properly defined flows satisfying a uniform $c_n$-pinching condition, without assuming bounded curvature. As applications, the hypersurface estimates yield convexity and cylindrical pinching for complete, spacetime-proper ancient hypersurface flows with uniform Euclidean area growth, without any bounded-curvature assumption. The planarity estimates imply that every complete, connected, spacetime-proper ancient flow satisfying the uniform $c_n$-pinching condition is either a static affine $n$-plane or lies in a fixed affine $(n+1)$-plane for all time.
\end{abstract}

\maketitle

\section{Introduction}

\subsection{Background and objectives}
Curvature pinching estimates play a fundamental role in the study
of singularities and ancient solutions of mean curvature flow.
They provide quantitative control of the second fundamental form
in regions of high curvature and are essential for understanding
the geometry of singularity models.
For hypersurface mean curvature flow, classical global results
include Huisken's umbilic estimate for convex hypersurfaces
\cite{Huisken84}, the convexity estimates of Huisken--Sinestrari
for mean-convex hypersurfaces \cite{HS99}, and their cylindrical
estimates for two-convex hypersurfaces \cite{HS09}.
These estimates show that the geometry of high-curvature regions
is increasingly constrained by convexity and cylindrical pinching.

A common feature of these global estimates is that their
constants depend on the initial hypersurface. Since singularity
formation is essentially a local phenomenon, it is desirable
to obtain estimates depending only on geometric data in a
neighborhood of the region under consideration.
Motivated by this, Langford \cite{Langford23} introduced a local
version of the Huisken--Stampacchia iteration and derived
local convexity and cylindrical pinching estimates for properly
immersed codimension-one mean curvature flows. His estimates require a curvature
pinching condition together with pointwise and integral curvature
bounds on an initial time slice and a lateral annulus.

In this paper, we first establish local convexity and cylindrical pinching estimates similar to those of Langford \cite{Langford23}. Building on the ideas of Langford \cite{Langford23}, we work under integral curvature bounds without imposing a pointwise upper bound on the mean curvature. We then apply these estimates to derive convexity and cylindrical pinching results for spacetime-proper ancient hypersurface flows with uniform Euclidean area growth. Separately, we also study the local pinching estimates for the mean curvature flow of higher codimensions. Using the geometric differential
inequalities of Naff and Lee--Naff--Zhu \cite{Naff22,LNZ25}, we
establish local pinching estimates, leading to
codimension reduction for spacetime-proper ancient flows without
assuming polynomial curvature growth.

\subsection{local convexity estimates in the codimension-one case}
Let \(X:M^n\times I\to\R^{n+1}\), \(n\ge2\), be a smooth mean 
curvature flow on the hypersurface equipped with a smooth unit normal \(\nu\). The shape
operator is \(A=D\nu\), and we adopt the sign convention
\[
 \partial_tX=-H\nu,\qquad
 \kappa_1\le\cdots\le\kappa_n,\qquad
 H=\sum_{i=1}^n\kappa_i,\qquad |A|^2=\sum_{i=1}^n\kappa_i^2,
\]
where \(\kappa_1,\cdots,\kappa_n\) are the principal curvatures.
For \(x_0\in\R^{n+1}\) and \(r>0\), let
\[
 B_r(x_0):=\{x\in\R^{n+1}:|x-x_0|<r\}.
\]
For a spacetime point \(z_0=(x_0,t_0)\in\R^{n+1}\times\R\), we write
\[
 P_r(z_0)=B_r(x_0)\times(t_0-r^2,t_0].
\]

In \cite{Langford23}, Langford introduced a local
version of the Huisken--Stampacchia iteration and derived the following
local convexity and cylindrical pinching estimates for properly
immersed codimension-one mean curvature flows. 

\begin{langfordtheorem}[Langford \cite{Langford23}]
Let $n\ge2$, $m\in\{0,\ldots,n-1\}$, $\alpha>0$, $q>0$,
$\lambda>\delta>0$, and $\Theta,V>0$ satisfy
\[
 \sqrt q\,\Theta\ge(\lambda-\delta)^{-1},\qquad \Theta V\ge1.
\]
Fix $R>0$, and set
$T=R^2/(2n)$ and
$\mathcal A_R=B_{\lambda R}\setminus B_{(\lambda-\delta)R}$.
Suppose a smooth mean curvature flow is properly defined in
$\overline{B_{\lambda R}}\times[0,T)$ and satisfies
\[
 \kappa_1+\cdots+\kappa_{m+1}\ge\alpha|A|
 \quad\text{on }(\overline{B_{\lambda R}}\times\{0\})
      \cup(\partial B_{\lambda R}\times(0,T)),
\]
\begin{equation}\label{eq:point_upbound_H}
   \sup_{(\overline{B_{\lambda R}}\times\{0\})\cup
             (\mathcal A_R\times(0,T))}\frac Hn
 \le\Theta R^{-1},  
\end{equation}
and
\[
 T\int_{M_0\cap B_{\lambda R}}H^q\,d\mu_0
 +\delta^{-2}\int_0^T\int_{M_t\cap\mathcal A_R}H^q\dmt
 \le (\Theta R^{-1})^q(VR)^{n+2}.
\]
Then, for every $\varepsilon>0$ and $\vartheta\in(0,1)$,
\begin{align*}
 \kappa_1&\ge-\varepsilon H-C_{\epsilon}R^{-1},\\
 \sum_{j=m+1}^n(\kappa_n-\kappa_j)
 &\le\sum_{j=1}^m\kappa_j+\varepsilon H+C_{\epsilon}R^{-1}
\end{align*}
on $B_{\vartheta(\lambda-\delta)R}\times(0,T)$, where
\[
 C_{\epsilon}
 =c(n,\alpha,q,\varepsilon)\Theta
             \left(\frac{\Theta V}{1-\vartheta}\right)^{2/q}.
\]
\end{langfordtheorem}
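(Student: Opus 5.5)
The plan is a localized Huisken--Stampacchia iteration, as in Huisken--Sinestrari, but cut off in space-time. I treat the convexity estimate in detail; the cylindrical estimate is identical with a different pinching function. Fix $\varepsilon>0$ and set
\[
 G=G_{\varepsilon}:=\text{a smooth, homogeneous degree-one, convex-type function}
\]
of the principal curvatures, for instance the regularized $-\kappa_1-\varepsilon H$ of Huisken--Sinestrari, or Langford's convex gradient-term version. It is chosen so that, under $\kappa_1+\dots+\kappa_{m+1}\ge\alpha|A|$ (preserved by the maximum principle as long as it holds on the parabolic boundary), it satisfies
\[
 (\partial_t-\Delta)G\le 2|A|^2G-\gamma\,\frac{|\nabla A|^2}{H}\quad\text{where }G>0,
\]
with $\gamma=\gamma(n,\alpha,\varepsilon)>0$. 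I also use the Poincaré-type inequality
\[
 \int |A|^2 G^p\varphi\le C\int\Big(\frac{|\nabla A|^2}{H^2}+\dots\Big)G^p\varphi,
\]
valid where $G>0$ (the estimate of Huisken--Sinestrari that exploits the Codazzi equations, whose positive contribution comes from where the hypersurface is not convex). Consider $f_\sigma=H^{\sigma}G_+/H$ with $\sigma$ small.

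Multiplying by a cutoff $\varphi=\psi(x)^{k}\eta(t)$, where $\psi$ is supported in $B_{\lambda R}$ and equals $1$ on $B_{(\lambda-\delta)R}$ with $|D\psi|\lesssim(\delta R)^{-1}$, one derives
\[
 \frac{d}{dt}\int f_\sigma^p\varphi+\int|\nabla f^{p/2}|^2\varphi\lesssim \int f^p\,(\partial_t-\Delta)\varphi+|D\varphi|^2\varphi^{-1}f^p .
\]
The key point is that the reaction term $p\sigma|A|^2f^p$ is absorbed via the Poincaré inequality once $p\ge c^{-1}$ and $\sigma\le c\,p^{-1/2}$. Error terms supported on the annulus $\mathcal A_R$ and at $t=0$ are controlled by $f_\sigma\le H^{\sigma}$ and the integral hypothesis, using Hölder to pass from $H^q$ to $H^{\sigma p}$ (taking $\sigma p\le q$). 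They are also controlled by the pointwise bound $H\le n\Theta R^{-1}$ there, which converts extra $H$-powers into powers of $\Theta R^{-1}$. This gives an $L^p$ bound
\[
 \int_0^T\!\!\int_{B_{(\lambda-\delta)R}} f_\sigma^p\lesssim (\Theta R^{-1})^{\sigma p}(VR)^{n+2},
\]
with a similar bound for the sup in time. The condition $\sqrt q\Theta\ge(\lambda-\delta)^{-1}$ ensures that the cutoff error $\delta^{-2}R^{-2}$ is dominated by the $\Theta R^{-1}$ scale.

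Next I run the Stampacchia iteration on the truncations $(f_\sigma-k)_+$, $k\ge k_0=\Theta R^{-1+\sigma}$, restricted to nested balls shrinking from $B_{(\lambda-\delta)R}$ to $B_{\vartheta(\lambda-\delta)R}$. I use the Michael--Simon Sobolev inequality, with the $H$-term handled by Hölder against the $L^p$ bound on $H$. Measures of superlevel sets then satisfy $|A(h)|\le C(h-k)^{-p}|A(k)|^{\beta}$ with $\beta>1$. Stampacchia's lemma gives $f_\sigma\le k_0+d$, with $d$ depending on $(1-\vartheta)^{-1}$ and the scaled volume $\Theta V$. The powers of $\Theta V/(1-\vartheta)$ come out as $2/q$ after optimizing $p$ against $q$.

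Finally, $G\le (k_0+d)H^{1-\sigma}$, and Young's inequality gives $G\le\varepsilon H+C_\varepsilon R^{-1}$, which is $\kappa_1\ge-2\varepsilon H-C R^{-1}$. For the cylindrical estimate I replace $G$ by the $(m+1)$-convex pinching quantity of Huisken--Sinestrari,
\[
 \sum_{j>m}(\kappa_n-\kappa_j)-\sum_{j\le m}\kappa_j-\varepsilon H,
\]
and use the convexity estimate just proved to control the region of negative curvature in the Poincaré inequality. I expect the main obstacle to be the $L^p$ estimate with a cutoff. The term $\langle\nabla f^p,\nabla\varphi\rangle$ and the time derivative of $\psi$ along the moving surface (where $(\partial_t-\Delta)|x|^2=-2n$ is used, and where $T=R^2/(2n)$ is chosen so that $\psi$ can be taken as $(\lambda^2R^2-|x|^2-2nt)_+$) must be absorbed without losing the smallness in $\sigma$. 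This requires carefully coupling $k$, $p$, $\sigma$ with the annular integral bound.
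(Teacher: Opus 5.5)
The paper does not prove this statement; it is quoted from \cite{Langford23}. The nearest in-paper argument is the proof of Theorem~\ref{thm:main}, which runs Langford's localized Huisken--Stampacchia scheme. Your outline follows that scheme: preserved cone condition, $G_{\varepsilon,\sigma}$ with reaction term $\sigma|A|^2G_{\varepsilon,\sigma}$, Poincar\'e absorption for $\sigma\le\ell p^{-1/2}$, a cutoff $L^p$ estimate fed by boundary data, Stampacchia with Michael--Simon, and Young. \textbf{One step would fail as written: the cylindrical estimate.} You propose, as in \cite{HS09}, to use the convexity estimate to control the non-convex region in the Poincar\'e inequality. Locally this estimate is not available where you need it. It holds only on $B_{\vartheta(\lambda-\delta)R}$, and with an additive error $CR^{-1}$. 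In the $L^p$ estimate, however, the Poincar\'e inequality is applied to $\psi v$ on the whole support of the cutoff, which reaches into $B_{\lambda R}\setminus B_{\vartheta(\lambda-\delta)R}$. Rerunning the argument on a smaller ball would need integral and pointwise bounds on $H$ on an interior annulus, and these are not assumed. The detour is also unnecessary. For $G=\kappa_n-\frac{H}{n-m}$, the closed cone $\{G_\varepsilon\ge0\}\cap\{\kappa_1+\cdots+\kappa_{m+1}\ge\alpha|A|\}$ contains no cylindrical ray. Rays with at least $m+1$ zero entries violate $(m+1)$-convexity, and those with at most $m$ zeros have $G_\varepsilon<0$. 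So Proposition~\ref{prop:poincare} applies directly, and the two estimates are proved identically, as in the paper and as your own first sentence says.

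You also place the main difficulty in the wrong spot. The cutoff is harmless: by \eqref{eq:cutoff}, $(\partial_t-\Delta)\zeta(X,t)=\partial_t\zeta-\tr_{TM_t}D^2\zeta$, so a time-independent $\zeta$ works. No cutoff like $(\lambda^2R^2-|x|^2-2nt)_+$ is needed; its positivity set at $t=T$ is empty when $\lambda\le1$. The essential use of \eqref{eq:point_upbound_H} is the reaction term in the truncated energy inequality, where Poincar\'e is unavailable. With $\sigma p=q$, that term satisfies $p\sigma|A|^2f_\sigma(f_\sigma-k)_+^{p-1}\le q\alpha^{-2}H^2f_\sigma^p=q\alpha^{-2}f_{\sigma'}^p$, where $\sigma'=\sigma+2/p$. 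H\"older then needs an $L^{p\rho}$ bound on $f_{\sigma'}$; compare \eqref{eq:I2} and \eqref{eq:holder2}. That is, you need the $L^p$ estimate for the admissible pair $(p\rho,\sigma')$, which forces $p\ge\rho(q+2)^2/\ell^2$, at the exponent $(q+2)\rho>q$. Its boundary terms are controlled only via $H^{(q+2)\rho}\le(n\Theta R^{-1})^{(q+2)\rho-q}H^q$. This is exactly the step the paper replaces by \eqref{eq:gainstart}--\eqref{eq:gain}. There is no interior ``$L^p$ bound on $H$'' to H\"older against. H\"older with $\sigma p<q$ would need an area bound, so take $\sigma p=q$. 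The exponent in $C_\varepsilon$ is then $p$-independent because $d^{1/\sigma}=(d^p)^{1/q}$, not because $p$ is optimized.

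Minor slips:
\begin{itemize}
\item For degree-one $G$ the reaction term is $|A|^2G$, not $2|A|^2G$. With the factor $2$, $f_\sigma$ would have reaction $(1+\sigma)|A|^2f_\sigma$, which cannot be absorbed.
\item $k_0$ should be $c(n,\alpha)(n\Theta R^{-1})^\sigma$. This makes $(f_\sigma-k)_+\equiv0$ at $t=0$ for $k\ge k_0$, which is why no time cutoff, and hence no waiting time as in Theorem~\ref{thm:main}, is needed. This is the second use of \eqref{eq:point_upbound_H}.
\item The annular error $\delta^{-2}R^{-2}$ is already built into the weighted hypothesis. The condition $\sqrt q\,\Theta\ge(\lambda-\delta)^{-1}$ instead bounds the interior cutoff scale $((1-\vartheta)(\lambda-\delta)R)^{-1}$ by $\sqrt q\,\Theta/((1-\vartheta)R)$; compare the term $\frac1{(1-\vartheta)(\lambda-\delta)}$ in $D_\tau$.
\end{itemize}
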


Integral bounds for the mean curvature arise naturally under various
geometric assumptions. For instance, on a closed, strictly convex
hypersurface satisfying $\kappa_1\ge\varepsilon H>0$, the Gauss map
is a diffeomorphism onto $\mathbb S^n$. Since
$\det A\ge\varepsilon^n H^n$, the area formula yields
\[
\int_{M_t} H^n\,d\mu_t
\le \varepsilon^{-n}|\mathbb S^n|.
\]
Another natural source of integral curvature control is a bound on
the spacetime area, which yields a local $L^2$-estimate for $H$;
see Lemma~\ref{lem:areaDissipation} below.
In contrast, a pointwise upper bound for the mean curvature is
generally much harder to obtain, particularly in local or noncompact
settings. This motivates our first main result, which establishes
local convexity and cylindrical pinching estimates without the
pointwise upper bound for mean curvature (i.e. \eqref{eq:point_upbound_H}) required in Langford's theorem,
assuming only suitable integral curvature bounds and allowing
a positive waiting time.

\Needspace{12\baselineskip}
\begin{theorem}
\label{thm:main}
Fix $n\ge2$, $m\in\{0,\ldots,n-1\}$, $\alpha>0$, $q>0$,
$\lambda>\delta>0$, and $R>0$, and set $T=R^2/(2n)$.
Suppose that a smooth mean curvature flow is properly defined in
$\overline B_{\lambda R}\times[0,T)$ and satisfies
\begin{equation}\label{eq:cone}
 \kappa_1+\cdots+\kappa_{m+1}\ge\alpha|A|
 \quad\text{on }(\overline{B_{\lambda R}}\times\{0\})
      \cup(\partial B_{\lambda R}\times(0,T)).
\end{equation}
Let $\mathcal A_R=B_{\lambda R}\setminus B_{(\lambda-\delta)R}$ and define
\begin{equation}\label{eq:E}
 \cE_q=R^{q-n-2}\left\{
 T\int_{M_0\cap B_{\lambda R}}H^q\,d\mu_0
 +\delta^{-2}\int_0^T\int_{M_t\cap\mathcal A_R}H^q\dmt\right\}.
\end{equation}
Assume that $\cE_q<\infty$. Given $\varepsilon>0$,
$\vartheta\in(0,1)$, and $\tau\in(0,1/(2n))$, the estimates
\begin{align}
 \kappa_1&\ge-\varepsilon H-C_{\varepsilon,\tau}R^{-1},
 \label{eq:convexity}\\
 \sum_{j=m+1}^n(\kappa_n-\kappa_j)
 &\le\sum_{j=1}^m\kappa_j+\varepsilon H+C_{\varepsilon,\tau}R^{-1}
 \label{eq:cylindrical}
\end{align}
hold in $B_{(\lambda-\delta)\vartheta R}\times[\tau R^2,T)$, where
\begin{equation}\label{eq:D}
 \begin{gathered}
  D_{\tau}=1+\tau^{-1/2}+\frac{1}{(1-\vartheta)(\lambda-\delta)},\\
  C_{\varepsilon,\tau}=c(n,\alpha,q,\varepsilon)D_{\tau}
  \left[1+\left(D_{\tau}^{n+2-q}\cE_q\right)^{\frac{2}{q(n+2)}}\right].
 \end{gathered}
\end{equation}
\end{theorem}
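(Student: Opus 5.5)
Our plan is to reduce Theorem~\ref{thm:main} to Langford's local Stampacchia iteration. The difference is that his pointwise bound \eqref{eq:point_upbound_H} on $H$ along the initial slice and the lateral annulus will be replaced by a pointwise bound manufactured from the integral quantity $\cE_q$. We pay for this by starting at time $\tau R^2$ instead of $0$. The device is a local $L^q\to L^\infty$ estimate for $H$, which we apply to the subsolution $H$ of $(\partial_t-\Delta)H=|A|^2H$ and to the quantities $f_\sigma=(\text{pinching quantity})H^{\sigma-1}$ that appear in Langford's argument.

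\emph{Step 1: parabolic collar.} Consider the flow on $B_{(\lambda-\delta)R}\times[\tau R^2/2,T)$ together with a thinner lateral collar inside $\mathcal A_R$. On the collar and on the new initial slice we do not control $H$ pointwise, so we do not try to bound $H$ itself. The cone condition \eqref{eq:cone} gives $H\ge\alpha|A|>0$ only on the parabolic boundary, and Langford's maximum-principle argument shows that it persists (possibly after shrinking $\alpha$) throughout $\overline B_{\lambda R}\times[0,T)$.

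\emph{Step 2: revisit Langford's iteration.} Langford's proof runs a local Stampacchia iteration on $(f_\sigma-k)_+\varphi$ for a spacetime cutoff $\varphi$. The pointwise bound on $H$ enters in two places only. First, it controls the lateral and initial terms; second, it converts the Poincaré-type inequality $\int|A|^2 f_\sigma^p\lesssim\int |\nabla A|^2 H^{-2}f_\sigma^{p}+\dots$ into an $L^p$ bound with $\sigma$ small. We will repeat the iteration with a cutoff in both space and time, $\varphi=\psi(x)\chi(t)$, where $\chi$ vanishes for $t\le\tau R^2/2$ and $|\chi'|\lesssim(\tau R^2)^{-1}$. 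The time cutoff makes the initial-slice term vanish. Its derivative produces bulk terms $\int\!\!\int f_\sigma^p|\chi'|$, and these are bounded, via $f_\sigma\le cH^\sigma$, by spacetime integrals of $H^{\sigma p}$. Choosing $\sigma p\le q$ and using Hölder together with the area bound from the monotonicity formula (the constant $D_\tau$ collects $\tau^{-1/2}$ and the radial cutoff scale $((1-\vartheta)(\lambda-\delta))^{-1}$), these are controlled by $\cE_q$ and $T\int_{M_0}H^q$.

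The one genuine gap is that interior spacetime integrals of $H^q$ are not part of $\cE_q$. We plan to control them by the evolution $\frac{d}{dt}\int H^q\psi\le\int q H^{q-1}(\Delta H+|A|^2H)\psi-\int H^{q+2}\psi+\dots$. The reaction term $|A|^2H$ is dangerous, so instead we integrate $H^q$ against the localized Gaussian/Brakke test, keeping only the subquadratic growth. Alternatively, we may need a preliminary step bounding $\sup H$ by a mean-value inequality in the region where $f_\sigma$ is not yet small.

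\emph{Step 3: De Giorgi conclusion.} With the $L^p$ bounds in hand, the Michael--Simon Sobolev inequality gives the usual recursion $\|(f_\sigma-k_{j+1})_+\|\le C\,2^{j}\,\|(f_\sigma-k_j)_+\|^{1+\gamma}$ on nested cylinders shrinking from $B_{(\lambda-\delta)R}\times[\tau R^2/2,T)$ to $B_{\vartheta(\lambda-\delta)R}\times[\tau R^2,T)$. This yields $f_\sigma\le C R^{-\sigma}$, and by Young's inequality we recover \eqref{eq:convexity} and \eqref{eq:cylindrical}, with the exponent $2/(q(n+2))$ coming from the iteration scaling. We expect the main obstacle to be Step 2: controlling interior $H^q$ integrals, with no pointwise bound and no $L^q$ control off the collar, while keeping the constant in the scale-invariant form $D_\tau[1+(D_\tau^{n+2-q}\cE_q)^{2/(q(n+2))}]$.
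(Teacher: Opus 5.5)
There is a genuine gap, and it is the one you flag in Step~2. Every route you propose controls the pinching quantity only through powers of $H$: a local $L^q\to L^\infty$ or mean-value bound for $H$, or interior integrals of $H^{\sigma p}$ obtained from the evolution of $\int H^q\psi$. No such control follows from $\cE_q$. The reaction term in $(\partial_t-\Delta)H=|A|^2H$ is quadratic in $H$, so Moser iteration does not close, and the bounds are in fact false.

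For a counterexample, take $R=1$, $\lambda-\delta\ge2$, and the round sphere centred at $0$ of initial radius $\sqrt{1+\eta}$. It stays inside $B_{\lambda-\delta}$, is smooth on $[0,T]$, and satisfies \eqref{eq:cone} for $\alpha\le(m+1)/\sqrt n$. Since the annulus contributes nothing, $\cE_q=Tn^q|\mathbb S^n|(1+\eta)^{(n-q)/2}$ is bounded independently of $\eta$. Yet $H\to n/\sqrt\eta$ as $t\uparrow T$ at points of $B_{\vartheta(\lambda-\delta)}$, and $\int_\tau^T\int_{M_t}H^Q\,d\mu_t\,dt\to\infty$ as $\eta\to0$ for every $Q\ge n+2$.

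This breaks Step~3 as you set it up. The truncated energy inequality \eqref{eq:truncated} contains the reaction term $\iint_{\{G_{\varepsilon,\sigma}>k\}}H^2G_{\varepsilon,\sigma,+}^p$, which must be controlled in $L^\rho$ for some $\rho>(n+2)/2$, up to time $T$. Estimating it via $G_{\varepsilon,\sigma,+}\le cH^\sigma$ turns it into $\iint H^{(q+2)\rho}$ with $(q+2)\rho>n+2$. For the sphere this is unbounded, even though the true term vanishes there (its level set is empty). So the factor $G_{\varepsilon,0,+}^p$ can never be traded for a constant. Killing the initial slice with $\chi(t)$ makes things worse: it discards the only data that reaches the interior, and leaves you needing interior moments of $H$ on $\operatorname{spt}\chi'$. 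Also, the hypotheses give no area bound, so ``H\"older plus monotonicity'' is unavailable; level-set measures must be handled by Chebyshev.

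The missing idea is to control $G_{\varepsilon,\sigma,+}^p=G_{\varepsilon,0,+}^pH^{\sigma p}$ directly, through its own energy inequality, rather than through any control of $H$.
\begin{itemize}
\item After the Poincar\'e inequality absorbs $\sigma p\int\psi^2|A|^2v^2$ (with $\beta=p^{-1/2}$ and $\sigma\le\ell p^{-1/2}$), the inequality \eqref{eq:damped} for $v=G_{\varepsilon,\sigma,+}^{p/2}$ has no reaction term left.
\item Run it from $t=0$ with a time-independent cutoff and $\sigma p=q$. Use $v^2\le c(n,\alpha)^pH^q$ only on $M_0$ and on the annulus. This gives \eqref{eq:low}, namely $\int_0^T\int_{B_{\lambda-\delta}}G_{\varepsilon,\sigma,+}^p\dmt\le c\,\cE_q$ (with $R=1$), using no interior information on $H$.
\item Use the waiting time to gain powers of $H$, not to delete the initial slice. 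Inequality \eqref{eq:damped} retains the damping term $\int\psi^2H^2v^2$ coming from $\partial_t\,d\mu_t=-H^2d\mu_t$. With a cutoff $\chi^N$ vanishing near $t=0$ one obtains \eqref{eq:gainstart}.
\item Interpolate against \eqref{eq:low} and apply Young's inequality. This bootstraps to bounds on $\iint G_{\varepsilon,\sigma,+}^{p(n+2)}\dmt$ and $\iint G_{\varepsilon,\sigma',+}^{p(n+2)}\dmt$, with $\sigma'=\sigma+2/p$, over $B_{r_0}\times[\tau/2,T)$ for some $\vartheta(\lambda-\delta)<r_0<\lambda-\delta$. Both are bounded by powers of $D_\tau$ times $\cE_q$.
\end{itemize}
These are exactly the inputs for H\"older in the level-set recursion. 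Starting that recursion at $k_0=D_\tau^\sigma$, with $a(k_0,0)\le cD_\tau^{-q}\cE_q$, produces the constant $D_\tau[1+(D_\tau^{n+2-q}\cE_q)^{2/(q(n+2))}]$. Your Step~3 is fine once these bounds are in hand, but nothing in your Steps~1--2 supplies them.
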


For ease of application, we record the following consequence of Theorem~\ref{thm:main}.

\Needspace{12\baselineskip}
\begin{corollary}\label{cor:collar}
Fix $n\ge2$, $m\in\{0,\ldots,n-1\}$, $\alpha>0$, $q>0$,
$\theta\in(1,2)$, $R>0$, and $z_0=(x_0,t_0)\in\R^{n+1}\times\R$.
Suppose a smooth mean curvature flow is properly defined in
$P_{2R}(z_0)$ and satisfies
\[
 \kappa_1+\cdots+\kappa_{m+1}\ge\alpha|A|
\]
throughout $P_{2R}(z_0)$.
Define
\begin{equation}\label{eq:collarSet}
 \mathcal C_{\theta,R}(z_0):=P_{2R}(z_0)\setminus P_{\theta R}(z_0).
\end{equation}
Here, \[
 P_r(z_0):=B_r(x_0)\times(t_0-r^2,t_0].
\]
Set
\begin{equation}\label{eq:collarQuantity}
 \cC_{q,\theta}(z_0;R)
 :=R^{q-n-2}\iint_{\mathcal C_{\theta,R}(z_0)}H^q\dmt.
\end{equation}
If $\cC_{q,\theta}(z_0;R)\le K<\infty$, then for every $\varepsilon>0$
there is $C=C(n,\alpha,q,\varepsilon,\theta)>0$ such that
\begin{align}
 \kappa_1&\ge-\varepsilon H-
       \frac{C}{R}\left(1+K^{2/(q(n+2))}\right),\label{eq:interiorConvexity}\\
 \sum_{j=m+1}^{n}(\kappa_n-\kappa_j)
 &\le\sum_{j=1}^{m}\kappa_j+\varepsilon H+
       \frac{C}{R}\left(1+K^{2/(q(n+2))}\right)\label{eq:interiorCylindrical}
\end{align}
on $P_R(z_0)$.
\end{corollary}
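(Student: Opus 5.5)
The plan is to derive Corollary~\ref{cor:collar} from Theorem~\ref{thm:main} by a covering argument. For each point $z_1=(x_1,t_1)\in P_R(z_0)$ we choose a small cylinder, translate and rescale, and apply the theorem so that $z_1$ lies in the good region $B_{(\lambda-\delta)\vartheta\rho}\times[\tau\rho^2,T)$. The key point is that the data appearing in $\cE_q$ (the initial slice and the lateral annulus) must lie inside the collar $\mathcal C_{\theta,R}(z_0)$.

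\textbf{Choice of parameters.} Fix $z_1=(x_1,t_1)\in P_R(z_0)$. Take spatial center $x_0$ (not $x_1$), radius parameter $\rho=cR$, and initial time $s_0=t_0-\theta' R^2$ with $\theta'\in(\theta^2,4)$, say $\theta'=(\theta^2+4)/2$. The flow then runs on $[s_0,s_0+T)$ with $T=\rho^2/(2n)$. Choose $\lambda\rho=\theta''R$ with $\theta''\in(\theta,2)$ and $(\lambda-\delta)\rho=\theta R$, so that the annulus $\mathcal A=B_{\theta''R}\setminus B_{\theta R}$ lies in the collar region. The difficulty is that $T=\rho^2/(2n)$ is short, so a single application need not reach $t_0$. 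We therefore allow a sequence of initial times. Fix $\rho$ with $\lambda\rho=\theta''R$ (say $\lambda=2$, $\rho=\theta''R/2$, $\delta=\lambda-2\theta/\theta''$), and let the initial time $s$ range over $[t_0-\theta'R^2,\,t_0-\theta'R^2+\rho^2/(4n)]$.

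\textbf{The initial-slice term.} The initial slice $M_s\cap B_{\lambda\rho}$ is contained in $P_{2R}(z_0)$. It lies in the collar because $s<t_0-\theta^2R^2$; this holds since $\rho^2/(4n)\le R^2/n$ is small compared with $\theta'-\theta^2$, after adjusting the constants (or shrinking $\rho$ by a factor depending on $\theta$). Averaging $\cE_q$ over $s$ in this window gives
\[
\int\Big(T\!\int_{M_s\cap B}H^q\Big)ds\le T\iint_{\mathcal C}H^q,
\]
so some $s$ has initial term at most $c(n,\theta)\,T\rho^{-2}\iint_{\mathcal C}H^q$. The annulus term is bounded directly by $\delta^{-2}\iint_{\mathcal C}H^q$. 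Hence $\cE_q\le c(n,\theta,q)K$.

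\textbf{The time-coverage step (main obstacle).} With the initial time $s$ fixed, the theorem yields estimates only on $[s+\tau\rho^2,s+T)$, a window of length about $\rho^2/(2n)$. This does not reach all of $(t_0-R^2,t_0]$. To cover that interval, chain the argument: for a target time $t_1$, choose the initial time $s\in[t_1-T+\tau'\rho^2,\,t_1-\tau\rho^2]$ with $\tau\rho^2$ fixed. This in turn requires the initial slice at $s$ to lie in the collar, which fails when $s\in(t_0-\theta^2R^2,t_0]$.

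The resolution I would try first is to use the hypothesis \eqref{eq:cone}, which holds throughout $P_{2R}(z_0)$. The initial-slice pinching condition is therefore automatic; only the $H^q$ integral over the initial slice needs to be controlled. The pinching estimates themselves already bound $\sup H$ on the region where they have been established only partially, so this does not close. Instead I would run the Stampacchia iteration from Theorem~\ref{thm:main}'s proof with a spacetime cutoff that vanishes near $t=t_0-4R^2$ in place of an initial slice. That is, I would redo the proof with cutoff $\eta(x,t)$ supported in $P_{2R}$, equal to $1$ on $P_{\theta R}$, and with $|\partial_t\eta|+|\nabla\eta|^2\lesssim R^{-2}$ supported in the collar. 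All error terms are then $R^{-2}\iint_{\mathcal C}H^q\le R^{n-q}K$, which with the scaling $D\sim c(\theta)$ gives the constant $C(1+K^{2/(q(n+2))})/R$. I expect this rederivation, checking that the time-derivative cutoff error is treated exactly like the lateral annulus error in Theorem~\ref{thm:main}, to be the main work.
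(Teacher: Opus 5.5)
As written, the proposal does not close. Your first route fails, as you note yourself, and the fallback is left as a program you have not carried out. The missing idea is that the time-coverage obstacle is self-inflicted. In Theorem~\ref{thm:main} the parameter $\lambda$ is unconstrained (only $\lambda>\delta>0$ is needed), so the spatial radius $\lambda R$ and the time span $T=R^2/(2n)$ are decoupled. By fixing $\lambda=2$ you forced the theorem's scale to be $\rho<R$, hence $T<R^2/4$. A window starting before $t_0-\theta^2R^2$ then ends before $t_0-(\theta^2-\tfrac14)R^2$.

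Instead, take the theorem's scale large and $\lambda,\delta$ small. Normalize $z_0=(0,0)$ and $R=1$, and set $b=(2+\theta)/2$ and $h=(4-\theta^2)/4$. Use your averaging argument to choose $s\in[-4+h,-4+2h]\Subset(-4,-\theta^2)$ with $\int_{M_s\cap B_b}H^q\,d\mu_s\le h^{-1}\iint_{\mathcal C}H^q\dmt$. Then apply the theorem once to $X(\cdot,s+\cdot)$ with
\[
 \widehat R=\sqrt{-2ns},\qquad
 \lambda=\frac{b}{\widehat R},\qquad
 \delta=\frac{b-\theta}{\widehat R},\qquad
 \vartheta=\frac1\theta,\qquad
 \tau=\frac1{4n}.
\]
With these choices:
\begin{itemize}
\item $T=-s$, so the flow runs all the way to $t=0$.
\item The lateral annulus is $(B_b\setminus B_\theta)\times(s,0)\subset\mathcal C$.
\item The good region is $B_1\times[s/2,0)$, which contains $B_1\times(-1,0)$ because $s/2\le-1-\theta^2/4$; the slice $t=0$ follows by continuity.
\item $\widehat R^2\in[5n,8n]$ gives $\widehat{\cE}_q\le C(n,q,\theta)K$.
\item $D_\tau=1+2\sqrt n+\widehat R/(\theta-1)\le C(n,\theta)$, because $(1-\vartheta)(\lambda-\delta)\widehat R=\theta-1$ is fixed while $\widehat R$ is bounded.
\end{itemize}
This single application is exactly the paper's proof.

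Your fallback is to rerun the proof of Theorem~\ref{thm:main} with a cutoff equal to $1$ on $P_{\theta R}$ whose derivatives are supported in the collar. This is sound in principle and would also yield the corollary. Only the low-order $L^2$ step changes. The time-derivative error of the cutoff is supported in $B_{2R}\times(t_0-4R^2,t_0-\theta^2R^2)$, and the spatial error in $(B_{2R}\setminus B_{\theta R})\times(t_0-4R^2,t_0]$. Both lie inside $\mathcal C_{\theta,R}(z_0)$ and are bounded by $C(\theta)R^{-2}$, so the initial-slice term is simply replaced by a second collar term.

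The higher-moment bootstrap and the Stampacchia iteration already use cutoffs vanishing near the initial time. They need only that low-order bound on their supports, now with cylinders shrinking from $P_{\theta R}$ to $P_R$. So your claim that \emph{all} error terms are bounded by $R^{-2}\iint_{\mathcal C}H^q$ is not literally right; the later ones are controlled through the new low-order estimate. More importantly, you defer this verification, calling it the main work, so the proposal is a plan rather than a proof. The rederivation is also unnecessary once you see that the theorem, with small $\lambda$, already covers this geometry.
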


The uniform Euclidean area-growth bound controls the $L^2$-energy of the mean curvature under the mean curvature flow; see Lemma~\ref{lem:areaDissipation} below. As an application of Corollary~\ref{cor:collar}, we therefore obtain the following corollary.

\begin{theorem}\label{thm:areaAncient}
	Let
$
	X:M^n\times(-\infty,0)\to\R^{n+1}
$
	be a smooth, complete ancient mean curvature flow without
	boundary, and suppose that the spacetime map $(X,t)$ is
	proper. Assume that, for some
	$m\in\{0,\ldots,n-1\}$ and $\alpha>0$,
	\[
	\kappa_1+\cdots+\kappa_{m+1}\ge\alpha|A|
	\]
	throughout $M\times(-\infty,0)$. Suppose further that
	there exists $\Lambda<\infty$ such that
	\begin{equation}\label{eq:uniformAreaGrowth}
		\mu_t(M_t\cap B_r(y))\le\Lambda r^n
		\qquad
		(t<0,\ y\in\R^{n+1},\ r>0).
	\end{equation}
	Then
	\begin{equation}\label{eq:areaAncientCone}
		\kappa_1\ge0,\qquad
		(n-m)\kappa_n\le H,\qquad
		|A|^2\le\frac{H^2}{n-m}
	\end{equation}
	on $M\times(-\infty,0)$. 	
	In particular, an ancient mean curvature flow $
	X:M^n\times(-\infty,0)\to\R^{n+1}
	$ satisfying the completeness, properness and
	volume hypotheses \eqref{eq:uniformAreaGrowth}, together with
	\begin{equation}\label{eq:entropyWeakPinching}
		|A|\le LH
		\qquad\text{for some }0<L<\infty
	\end{equation}
	is weakly convex. 
\end{theorem}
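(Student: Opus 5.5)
The plan is to apply Corollary~\ref{cor:collar} with $q=2$ on parabolic cylinders of arbitrarily large radius, and to use the area-growth hypothesis \eqref{eq:uniformAreaGrowth} to make the collar quantity scale-invariantly bounded.

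\textbf{Step 1: a scale-invariant $L^2$ bound for $H$.} Fix $z_0=(x_0,t_0)$ with $t_0<0$. For $R$ large the cylinder $P_{2R}(z_0)$ contains times $t\ge 0$, where the flow is not defined. I will therefore apply the corollary with $z_0$ replaced by $(x_0,t_0')$ for some $t_0'<t_0$ satisfying $t_0\in(t_0'-R^2,t_0']$, for instance $t_0'=t_0$ itself, since the cylinder only extends backward in time. Thus $P_{2R}(z_0)\subset\R^{n+1}\times(-\infty,0)$, and the flow is properly defined there because $(X,t)$ is proper and $M$ is complete without boundary. Lemma~\ref{lem:areaDissipation} (area dissipation with a cutoff) should give
\[
\int_{t_0-4R^2}^{t_0}\int_{M_t\cap B_{2R}(x_0)}H^2\dmt\le C(n)\bigl(\mu_{t_0-4R^2}(M\cap B_{4R}(x_0))+R^{-2}\cdot R^2\sup_t\mu_t(M\cap B_{4R})\bigr)\le C(n)\Lambda R^n.
\]
Hence, with $q=2$, $\cC_{2,\theta}(z_0;R)\le R^{-n}\cdot C\Lambda R^n=:K$, which is independent of $R$. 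I expect this to be the only real technical point. It requires checking that the localized identity $\frac{d}{dt}\int\varphi\,d\mu_t=-\int\varphi H^2+\int(\partial_t-\Delta)\varphi$, with $\varphi$ a spacetime cutoff of the form $(16R^2-|x-x_0|^2-2n(t-t_0'))_+^k$, is justified for proper, possibly unbounded-curvature flows. Properness gives compact support on each slice, so this should be fine.

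\textbf{Step 2: letting $R\to\infty$.} Corollary~\ref{cor:collar} yields, at $z_0$,
\[
\kappa_1\ge-\varepsilon H-\tfrac{C}{R}(1+K^{1/(n+2)}),
\]
together with the analogous cylindrical inequality. Letting $R\to\infty$ and then $\varepsilon\to0$ gives $\kappa_1\ge0$ and $\sum_{j=m+1}^n(\kappa_n-\kappa_j)\le\sum_{j=1}^m\kappa_j$. The latter rearranges to $(n-m)\kappa_n\le\sum_{j=1}^n\kappa_j=H$.

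\textbf{Step 3: the $|A|^2$ bound and the weakly convex case.} Since $\kappa_i\ge0$, we have $|A|^2=\sum\kappa_i^2\le\kappa_n\sum\kappa_i=\kappa_n H\le H^2/(n-m)$.

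For the final claim, assume $|A|\le LH$. This forces $H\ge0$, and $H>0$ wherever $A\ne0$. Then $\kappa_1+\cdots+\kappa_n=H\ge L^{-1}|A|$, which is the cone condition with $m=n-1$ and $\alpha=1/L$. It holds trivially where $A=0$. Applying the first part with $m=n-1$ gives $\kappa_1\ge0$, so the flow is weakly convex.
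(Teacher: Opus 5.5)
Your proposal is correct and is essentially the paper's argument: the paper combines Lemma~\ref{lem:areaDissipation} with Corollary~\ref{cor:collar} (taking $q=2$, $\theta=3/2$) in Proposition~\ref{prop:areaPinching}, bounds $\mathcal V(z_0;R)\le 3^{n+2}\Lambda$ uniformly in $R$ using \eqref{eq:uniformAreaGrowth}, and then lets $R\to\infty$ followed by $\varepsilon\downarrow0$, exactly as you do. Your concern about the cylinders reaching times $t\ge0$ is unnecessary, since $P_{2R}(z_0)$ only extends backward in time; your Step~3, giving $|A|^2\le\kappa_nH\le H^2/(n-m)$ and the weakly convex case via $m=n-1$, $\alpha=1/L$, supplies details the paper leaves implicit.
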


\begin{remark}
Lee--Naff--Zhu \cite[Theorem~1.3]{LNZ25} established
	convexity for mean-convex ancient hypersurface flows
	satisfying 
	\begin{equation}\label{eq:uniformEntropy}
		\Lambda_{\mathrm{ent}}
		:=\sup_{t<0}\sup_{y\in\R^{n+1},\,r>0}
		(4\pi r^2)^{-n/2}
		\int_{M_t}
		\exp\left(-\frac{|X-y|^2}{4r^2}\right)\dmu<\infty.
	\end{equation}
	 and the uniform
	curvature pinching condition \eqref{eq:entropyWeakPinching},
	under an additional polynomial curvature-growth assumption.
	They asked in \cite[Question~5.3]{LNZ25} whether this polynomial curvature-growth
 assumption can be removed.
	Observe that the entropy bound \eqref{eq:uniformEntropy}
	implies the uniform  area-growth estimate
	\begin{equation}\label{eq:entropyToArea}
		\mu_t(M_t\cap B_r(y))
		\le e^{1/4}(4\pi)^{n/2}
		\Lambda_{\mathrm{ent}}\,r^n
	\end{equation}
	for all $t<0$, $y\in\R^{n+1}$, and $r>0$.
	Consequently, Theorem~\ref{thm:areaAncient} yields
	convexity without the polynomial curvature-growth
	assumption, provided the flow is spacetime-proper.
	This gives an affirmative answer to the question of
	Lee--Naff--Zhu within the class of spacetime-proper
	ancient flows.
	
\end{remark}

\subsection{Local convexity estimates in the higher-codimension case}

Let $F:M^n\times I\to\R^N$, where $n\ge2$ and $N\ge n+1$,
be a smooth mean curvature flow satisfying
\[
\partial_tF=\HH=\Delta F,
\]
where $\Avec$ denotes the normal-valued second fundamental form
and $\HH=\tr_g\Avec$ is the mean curvature vector.
Set
\begin{equation}\label{hc:eq:cn}
	c_n=\min\left\{
	\frac4{3n},\frac{3(n+1)}{2n(n+2)}
	\right\}.
\end{equation}
We consider the following   pinching condition
\begin{equation}\label{hc:eq:pinch}
	|\Avec|^2\le(c_n-\delta_0)|\HH|^2
\end{equation}
for some fixed $\delta_0\in(0,c_n)$.
At points where $|\HH|>0$, let
\begin{equation}\label{hc:eq:decomposition}
	\nu_1=\frac{\HH}{|\HH|},\qquad
	h_{ij}=\langle\Avec_{ij},\nu_1\rangle,\qquad
	\Ah_{ij}=\Avec_{ij}-h_{ij}\nu_1.
\end{equation}
Thus
\[
\tr_g h=|\HH|,
\qquad
|\Avec|^2=|h|^2+|\Ah|^2.
\]

For higher codimensional mean curvature flow, Naff~\cite{Naff22} established a planarity estimate for compact mean curvature flows under the pinching condition $|\Avec|^2 \leq c_n|\HH|^2$. Lee--Naff--Zhu~\cite{LNZ25} subsequently obtained the planarity estimate under the pinching condition \eqref{hc:eq:pinch} and established codimension reduction for pinched ancient flows under an additional polynomial curvature-growth assumption.

Building on their geometric differential inequalities, we establish a local planarity estimate without a pointwise curvature bound. This also yields a time-dependent estimate and, for spacetime-proper ancient flows, codimension reduction without assuming polynomial curvature growth.
Our first result in this direction is the following local planarity estimate.

\begin{theorem}\label{hc:thm:local}
Fix $n\ge2$, $N\ge n+1$, $\delta_0\in(0,c_n)$, $R>0$,
$y\in\R^N$ and $t_0\in\R$. Suppose a smooth mean curvature flow
$F:M^n\times I\to\R^N$, with $M$ without boundary and
$[t_0-R^2,t_0]\subset I$, is properly defined in
\[
 \overline{B_R(y)}\times[t_0-R^2,t_0]
\]
and satisfies \eqref{hc:eq:pinch} on
$B_R(y)\times[t_0-R^2,t_0]$. Then
\begin{equation}\label{hc:eq:local}
 |\Ah|^2\le C R^{-2\sigma_0}|\HH|^{2-2\sigma_0}
\end{equation}
on $B_{R/2}(y)\times[t_0-R^2/2,t_0]$, where
$\sigma_0=\sigma_0(n,\delta_0)\in(0,1)$ and $C=C(n,\delta_0)>0$.
\end{theorem}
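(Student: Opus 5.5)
The plan is to run a local Stampacchia/De Giorgi iteration on a pinched ratio of $|\Ah|^2$ to a power of $|\HH|^2$. This follows the structure of the hypersurface argument (Theorem~\ref{thm:main}), but replaces the integral energy hypothesis with a bound that comes for free from the strict pinching \eqref{hc:eq:pinch}. First, \eqref{hc:eq:pinch} forces $|\HH|>0$ wherever $\Avec\neq0$. Wherever $\Avec=0$, the estimate \eqref{hc:eq:local} is trivial, so the decomposition \eqref{hc:eq:decomposition} is available where needed. Following Naff and Lee--Naff--Zhu \cite{Naff22,LNZ25}, I will consider
\[
 f=\frac{|\Ah|^2+\text{(lower-order correction)}}{|\HH|^{2-2\sigma}}
 \qquad\text{or}\qquad
 f_\sigma=\frac{|\Ah|^2}{g^{1-\sigma}},\quad g=\Big(c_n-\tfrac{\delta_0}{2}\Big)|\HH|^2-|\Avec|^2 ,
\]
where $g\ge\frac{\delta_0}{2}|\HH|^2$. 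For $\sigma$ small, their computation gives
\[
 (\partial_t-\Delta)f_\sigma\le -\varepsilon_0\frac{|\nabla \Avec|^2}{g}f_\sigma-\varepsilon_0\frac{\langle\nabla f_\sigma,\nabla g\rangle}{g}+2\sigma|\Avec|^2 f_\sigma ,
\]
together with a Poincaré-type inequality controlling $\int|\Avec|^2 f_\sigma^p\varphi$ by gradient terms. I will invoke these pointwise inequalities as the geometric input. Unlike the hypersurface case, $f_\sigma\le C|\HH|^{2\sigma}$ already follows from pinching. Since \eqref{hc:eq:local} with constant $CR^{-2\sigma_0}$ is exactly $f_{\sigma_0}\le CR^{-2\sigma_0}$ up to constants, the target is a scale-invariant $L^\infty$ bound on $f_\sigma$ in the half-cylinder.

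Next I localize. Let $\varphi=\psi(|x-y|^2+2n(t-t_0+R^2))$ be a spacetime cutoff, with $\psi$ compactly supported in $[0,R^2)$. Because $(\partial_t-\Delta)(|x-y|^2+2nt)=0$, the cutoff satisfies $|(\partial_t-\Delta)\varphi|+|\nabla\varphi|^2/\varphi\lesssim R^{-2}$. Properness on $\overline{B_R(y)}\times[t_0-R^2,t_0]$ makes $\spt\varphi\cap M_t$ compact, so all integrations by parts are justified. I multiply by powers $(f_\sigma-k)_+^p\varphi^{\ell}$ and integrate. Using the good gradient term and the Poincaré inequality with $p\sim\sigma^{-1/2}$, the reaction term $\sigma|\Avec|^2$ is absorbed. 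The outcome is the $L^p$ bound
\[
 \sup_t\int (f_\sigma-k)_+^p\varphi^\ell\,d\mu_t+\iint|\nabla (f_\sigma-k)_+^{p/2}|^2\varphi^\ell\le C R^{-2}\iint_{\spt\varphi} (f_\sigma-k)_+^{p}\varphi^{\ell-1}.
\]

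The main obstacle is starting the iteration. Since no integral bound on $|\HH|$ is assumed, the right-hand side must be made finite and scale-invariant, and this is where the waiting/half-cylinder geometry enters. I will handle it by exploiting the power $\sigma$ in the denominator. Write $f_\sigma\le C|\HH|^{2\sigma}$ and work with $f_{\sigma}$ at a slightly larger $\sigma'$, so that $f_{\sigma}\le C f_{\sigma'}^{\sigma/\sigma'}\,\cdot$ powers of $|\HH|$. The needed quantity is then $\iint |\HH|^{2\sigma p}\varphi$ with $2\sigma p$ small. I will control it via the local area bound of Lemma~\ref{lem:areaDissipation}: Brakke's monotone localized area with cutoff $\varphi$ gives $\iint |\HH|^2\varphi\le C\int_{M_{t_0-R^2}}\varphi\,d\mu$. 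If this initial area is not a priori bounded, I would instead normalize by a level $k$ chosen relative to that area, accepting dependence only on $n,\delta_0$ after scaling. This requires checking that the final constant does not see the area, which is the delicate point.

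With a finite starting norm, I run the Michael--Simon Sobolev inequality on $(f_\sigma-k)_+^{p/2}\varphi$. Here the $|\HH|^2$ term in Michael--Simon is again absorbed by $|\Avec|^2$-terms via pinching. A Stampacchia iteration on the level sets $k$ and shrinking cylinders $B_{R/2}\times[t_0-R^2/2,t_0]$ then yields $f_{\sigma_0}\le CR^{-2\sigma_0}$, with $\sigma_0$ chosen from $\sigma$. This gives \eqref{hc:eq:local}.
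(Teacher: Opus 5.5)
There is a genuine gap, and it is the one you flag as the ``delicate point''. A Stampacchia iteration of the kind you describe ends with a bound $k_0+d$, where (as in Lemma~\ref{lem:stampacchia}) $d^p\gtrsim C_*a_0^{\omega}$. Here $a_0$ and $C_*$ are controlled by the spacetime area of the flow in the cylinder and by $\iint|\HH|^{2\sigma p}$ there. Nothing in the hypotheses bounds these quantities; properness only makes them finite. Your Brakke-type bound trades them for the area of the initial slice, which is equally uncontrolled. Shifting the levels $k$ by an area-dependent amount only moves the dependence into $k_0$. So this route gives at best a constant depending on $n$, $\delta_0$ and the local area, not $C(n,\delta_0)$.

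Your cutoff is also unsuitable. The function $\psi(|x-y|^2+2n(t-t_0+R^2))$ is nonzero at $t=t_0-R^2$, which produces an initial-slice term for which you have no data. It also vanishes for $t\ge t_0-R^2+R^2/(2n)$. Since $n\ge2$, it therefore vanishes on all of $B_{R/2}(y)\times[t_0-R^2/2,t_0]$, the region where you want the estimate.

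The missing idea is that the reaction term is favorable and superlinear, not something to be absorbed by a Poincar\'e inequality. Your model inequality with $+2\sigma|\Avec|^2f_\sigma$ throws this away. Take $f=c_0|\HH|^2-|\Avec|^2$ and $u=|\Ah|^2/f^{1-\sigma_0}$ as in Lemma~\ref{hc:lem:absorbing}. The inequality $(\partial_t-\Delta)|\Ah|^2\le(1-2\sigma_0)\frac{|\Ah|^2}{f}(\partial_t-\Delta)f$ from Lemma~\ref{hc:lem:input} gives $(\partial_t-\Delta)u\le-\sigma_0\frac{u}{f}(\partial_t-\Delta)f$ up to gradient terms. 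For $w=u^{1/\sigma_0}$ those gradient terms combine into $-\frac{1-\sigma_0}{\sigma_0^2}u^{1/\sigma_0-2}|\nabla u-\sigma_0u\nabla\log f|^2\le0$. Moreover $(\partial_t-\Delta)f/f\ge2|h|^2\ge\frac2n|\HH|^2\ge\frac{2}{nc_0}f$, and $w\le K^{1/\sigma_0}f$ by the pinching. Together these give $(\partial_t-\Delta)w\le-\beta w^2$ with $\beta=\beta(n,\delta_0)$.

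An absorption term of this type yields a data-free interior bound directly from the maximum principle (a Keller--Osserman type mechanism). Let $\psi=\varphi(F)\vartheta(t)$, with $\varphi$ a spatial cutoff on $B_R(y)$ and $\vartheta$ vanishing at $t_0-R^2$ and equal to one after $t_0-R^2/2$. Properness guarantees that $W=\psi^2w$ attains its maximum. At a positive maximum,
\[
0\le-\beta\psi^2w^2+2\psi w(\partial_t-\Delta)\psi+6w|\nabla\psi|^2\le-\beta\psi^2w^2+C(n)R^{-2}w .
\]
Hence $w\le C(n)/(\beta R^2)$ on the half cylinder. Unwinding $|\Ah|^2=w^{\sigma_0}f^{1-\sigma_0}$ and $f\le c_0|\HH|^2$ gives \eqref{hc:eq:local}. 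No integral quantity, Sobolev inequality or iteration enters, which is exactly why the constant depends only on $n$ and $\delta_0$.
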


As a direct consequence of Theorem~\ref{hc:thm:local}, we obtain the
following estimate without the bounded-curvature assumption imposed
in \cite[Theorem~1.1]{LNZ25}.

\begin{theorem}\label{hc:cor:timePlanarity}
Fix $n\ge2$, $N\ge n+1$, $\delta_0\in(0,c_n)$ and $T>0$.
Let
\[
 F:M^n\times[0,T]\longrightarrow\R^N
\]
be a smooth, complete mean curvature flow, where $M$ is nonempty,
connected and without boundary. Suppose that 
$F$ is proper and that
\eqref{hc:eq:pinch} holds throughout $M\times[0,T]$ for some $\delta_0>0$.
Then either the flow is a static affine $n$-plane, or $|\HH|>0$ on
$M\times(0,T]$ and
\begin{equation}\label{hc:eq:timePlanarity}
 |\Ah|^2\le C t^{-\sigma_0}|\HH|^{2-2\sigma_0}
 \qquad\text{on }M\times(0,T],
\end{equation}
where $\sigma_0=\sigma_0(n,\delta_0)\in(0,1/2)$ and
$C=C(n,\delta_0)>0$. 
\end{theorem}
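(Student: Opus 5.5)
The plan is to deduce the statement from Theorem~\ref{hc:thm:local} by choosing the scale $R\sim\sqrt t$ at each spacetime point, after first separating off the degenerate case $\HH\equiv0$.

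\emph{Dichotomy.} Since \eqref{hc:eq:pinch} gives $|\Avec|^2\le(c_n-\delta_0)|\HH|^2$, any point with $\HH=0$ also has $\Avec=0$. By the strong maximum principle applied to the evolution of $|\HH|^2$ (or of $|\Avec|^2$, which under the pinching is comparable to $|\HH|^2$), if $|\HH|$ vanishes at some $(p,t_1)$ with $t_1>0$, then $|\HH|\equiv0$ on $M\times[0,t_1]$ by connectedness. To make this rigorous on a complete noncompact manifold, I would use the inequality $(\partial_t-\Delta)|\HH|^2\ge -2|\nablaN\HH|^2+\tfrac{2}{n}|\HH|^4$ and apply the strong maximum principle locally; this requires no global bounds. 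Then $\Avec\equiv0$ on $[0,t_1]$, so $M_t$ is a totally geodesic complete connected immersion, i.e.\ an affine $n$-plane. Since $\partial_tF=\HH=0$, the flow is static up to time $t_1$. Forward uniqueness, or simply the observation that the set of times where $\Avec\equiv 0$ is open and closed, shows the plane persists to $T$; openness uses uniqueness of the flow of a plane among proper flows satisfying the pinching. If instead the flow is never a plane, we get $|\HH|>0$ on $M\times(0,T]$.

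\emph{Local-to-global.} Fix $(p,t)$ with $t\in(0,T]$, set $y=F(p,t)$, $t_0=t$ and $R=\sqrt{t}$. Then $[t_0-R^2,t_0]=[0,t]\subset[0,T]$. Properness of $F$ on $M\times[0,T]$ ensures the flow is properly defined in $\overline{B_R(y)}\times[0,t]$, and the pinching holds there. Theorem~\ref{hc:thm:local} then gives at $(y,t)$, which lies in $B_{R/2}(y)\times[t-R^2/2,t]$,
\[
|\Ah|^2\le C t^{-\sigma_0}|\HH|^{2-2\sigma_0}.
\]
This is exactly \eqref{hc:eq:timePlanarity}, with $R^{-2\sigma_0}=t^{-\sigma_0}$. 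The stated range $\sigma_0\in(0,1/2)$ is obtained by shrinking $\sigma_0$ if necessary. Using $|\Ah|^2\le|\Avec|^2\le c_n|\HH|^2$, any smaller exponent $\sigma'\le\sigma_0$ also works at the cost of constants, via an interpolation between the two bounds. Precisely, the case $t^{-1}|\HH|^{-2}$ small favors the local estimate, and otherwise $|\HH|^2\lesssim t^{-1}$ converts the trivial bound. Concretely, if $|\HH|^2t\ge1$, then $t^{-\sigma_0}|\HH|^{-2\sigma_0}\le t^{-\sigma'}|\HH|^{-2\sigma'}$; if $|\HH|^2 t<1$, then $1\le (t|\HH|^2)^{-\sigma'}$.

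\emph{Main obstacle.} The main difficulty is the dichotomy step, namely the strong maximum principle and the propagation of $\HH\equiv0$ without curvature bounds. The local maximum principle handles backward propagation. For the forward direction, I would note that once $M_{t_1}$ is a plane, the local estimate of Theorem~\ref{hc:thm:local} (or pseudolocality-type uniqueness) is not needed. Instead, applying the strong maximum principle at any later zero, together with continuity, shows that the zero set of $|\HH|$ in $(0,T]$ is an initial segment $(0,t_*]$. The statement's dichotomy should be read with this in mind. If needed, I would argue via uniqueness of proper solutions starting from a plane, using the monotonicity formula and Gaussian density $1$ everywhere, which forces the flow to remain a plane by the equality case of Huisken monotonicity.
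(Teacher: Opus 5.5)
\textbf{The gap is in the dichotomy step.} The inequality $(\partial_t-\Delta)|\HH|^2\ge-2|\nablaN\HH|^2+\tfrac2n|\HH|^4$ does not make $|\HH|^2$ a supersolution of any linear parabolic operator, so the strong minimum principle cannot be applied to it. The term $-2|\nablaN\HH|^2$ cannot be bounded below by $-C|\nabla|\HH|^2|-C|\HH|^2$, because Kato's inequality $|\nabla|\HH||\le|\nablaN\HH|$ goes the wrong way. At any zero of $\HH$ one has $\partial_t|\HH|^2=0$ and $\Delta|\HH|^2=2|\nablaN\HH|^2$, so your inequality holds with equality there and gives no contradiction. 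Nothing in it rules out isolated zeros of $\HH$.

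This is the real difference from codimension one, where $H$ satisfies the linear equation $(\partial_t-\Delta)H=|A|^2H$. Switching to $|\Avec|^2$ does not help: its evolution contains $-2|\nablaN\Avec|^2$, and zeroth-order comparability with $|\HH|^2$ does not control a gradient term.

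The missing idea is to let the pinching act on the gradient terms. Use the function $f=c_0|\HH|^2-|\Avec|^2$ of Lemma~\ref{hc:lem:input}. It satisfies $(\partial_t-\Delta)f\ge0$: the pinching makes the reaction terms favourable, and it makes the gradient terms favourable via $|\nablaN\Avec|^2\ge\frac{3}{n+2}|\nablaN\HH|^2$ and $c_0<c_n\le\frac3{n+2}$. It also satisfies $f\ge(c_0-c_n+\delta_0)|\HH|^2\ge0$. The strong minimum principle for this nonnegative supersolution of the heat equation then gives $f\equiv0$ on $M\times[0,t_1]$, hence $\HH\equiv0$ and $\Avec\equiv0$ there. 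This is exactly how the paper argues.

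\textbf{The rest of your proposal is sound.} Your local-to-global step is the paper's, up to the choice of scale: the paper uses $R=\sqrt{t/2}$ on $[t/2,t]$ and absorbs $2^{\sigma_0}$, while you use $R=\sqrt t$ on $[0,t]$. Both cylinders are admissible in Theorem~\ref{hc:thm:local}. Shrinking $\sigma_0$ is unnecessary, since Lemma~\ref{hc:lem:input} already gives $\sigma_0\le 1/8$, but your interpolation is correct.

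You are also right that the strong minimum principle propagates zeros only backward in time; the paper's proof passes over this by asserting $f=0$ on all of $M\times[0,T]$. For the forward direction you need neither uniqueness theory nor the equality case of Huisken's monotonicity formula. The global form of the latter would require area bounds at later times that you do not have.

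Since $F$ is proper on $M\times[0,T]$, the barrier argument from the proof of Corollary~\ref{hc:thm:ancient} works directly. Suppose $F(M,t_1)\subset y+E$ with $\dim E=n$, and let $e_0\perp E$ be a unit vector. Then $v=\langle F-y,e_0\rangle$ is caloric. The function $Z_\pm=\pm v-\epsilon(1+|F-y|^2+2n(t-t_1))-\kappa(t-t_1)$ satisfies $(\partial_t-\Delta)Z_\pm=-\kappa<0$. It is negative at $t=t_1$ and outside a compact subset of $M\times[t_1,T]$. The weak maximum principle gives $Z_\pm\le0$, and letting $\kappa,\epsilon\downarrow0$ gives $v\equiv0$. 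Hence each slice for $t\ge t_1$ lies in $y+E$, is totally geodesic, and has $\HH\equiv0$. The flow is therefore a static plane on all of $[0,T]$, so the dichotomy holds as stated, not merely with an initial-segment zero set.
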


As a further consequence of Theorem~\ref{hc:cor:timePlanarity},
we obtain the following codimension-reduction result for
spacetime-proper ancient flows.

\begin{corollary}\label{hc:thm:ancient}
Let $n\ge2$ and $N\ge n+1$. Let
\[
 F:M^n\times(-\infty,0)\longrightarrow\R^N
\]
be a smooth, complete ancient mean curvature flow, where $M$ is nonempty,
connected and without boundary. Suppose that 
$F$ is proper and that
\eqref{hc:eq:pinch} holds throughout $M\times(-\infty,0)$ for some $\delta_0>0$. Then either the flow is a static affine $n$-plane,
or $|\HH|>0$ everywhere and there is a unique affine $(n+1)$-plane
$\Pi\subset\R^N$ such that
\begin{equation}\label{hc:eq:fixedplane}
 F(M,t)\subset\Pi\qquad(t<0).
\end{equation}
\end{corollary}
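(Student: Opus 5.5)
The plan is to apply Theorem~\ref{hc:cor:timePlanarity} on time intervals starting arbitrarily far in the past, and then let the starting time go to $-\infty$. Fix $t_1<0$ and any $s<t_1$. Restrict the flow to $M\times[s,t_1]$ and shift time by $s$. This gives a complete, proper flow on $[0,T]$ with $T=t_1-s$. Properness is inherited, since spacetime properness on $M\times(-\infty,0)$ implies properness of each time slice, and restricting to a compact time interval keeps $(F,t)$ proper.

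Theorem~\ref{hc:cor:timePlanarity} then gives a dichotomy. Either the flow on $[s,t_1]$ is a static affine $n$-plane, or $|\HH|>0$ on $M\times(s,t_1]$ and
\[
|\Ah|^2\le C\,(t-s)^{-\sigma_0}|\HH|^{2-2\sigma_0}.
\]
Here $C$ and $\sigma_0$ depend only on $n$ and $\delta_0$, not on $s$.

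First consider the case where the static alternative occurs for some interval $[s,t_1]$. Then $\Avec\equiv0$ on that slab. I would propagate this to all of $(-\infty,0)$ as follows. On any later slab $[s',t_2]$ with $s'<t_1$, the second alternative would force $|\HH|>0$ at times in $(s',t_1]$, which contradicts $\HH\equiv0$ there. So every such slab is also static. Taking unions, the whole flow is a static plane.

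Otherwise, for every $s$ we have $|\HH|>0$ on $(s,t_1]$ together with the estimate above. Fix a point $(p,t)$ with $t\le t_1$ and let $s\to-\infty$. Since $C$ does not depend on $s$, we get $|\Ah|(p,t)=0$. As $t_1<0$ was arbitrary, $|\HH|>0$ and $\Ah\equiv0$ on all of $M\times(-\infty,0)$.

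The remaining step, and the one I expect to be the main obstacle, is to turn $\Ah\equiv0$ with $\HH\neq0$ into containment in a fixed $(n+1)$-plane. The condition $\Ah\equiv0$ says $\Avec=h\,\nu_1$, so the second fundamental form takes values in the line bundle spanned by $\nu_1$. The strategy has three parts.
\begin{itemize}
\item Show that $\nu_1$ is parallel in the normal bundle, i.e.\ $\nablaN\nu_1=0$. Codazzi gives $\nablaN_k\Avec_{ij}=(\nabla_k h_{ij})\nu_1+h_{ij}\nablaN_k\nu_1$, and this expression must be symmetric in $i,j,k$. The first term is symmetric because $h$ satisfies Codazzi, so $h_{ij}\nablaN_k\nu_1$ must also be symmetric. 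Since $\nablaN_k\nu_1\perp\nu_1$, the Ricci equation for the flat ambient space gives $[h,h]$-type curvature terms only, and these vanish. Taking the trace over $i,j$ gives $\nablaN_k\nu_1\,|\HH|=h_{kj}\nablaN_j\nu_1$. I would then exploit the pinching $|h|^2\le(c_n-\delta_0)|\HH|^2<|\HH|^2/... $, which forces the eigenvalues of $h$ to be smaller than $|\HH|=\tr h$ in a way that rules out a nonzero solution. Alternatively, I would use symmetry in $i,k$ and contract to obtain $(|\HH|\delta_{jk}-h_{jk})\nablaN_j\nu_1=0$, where the matrix is invertible by pinching.
\item Conclude that the normal $(N-n-1)$-plane bundle $\nu_1^\perp\cap N M$ is constant in $\R^N$ on each time slice. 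Using the evolution equation $\partial_t F=\HH$, show that it is also constant in time, for instance via $\nabla_t^\perp\nu_1=0$, which follows from the same identity on the evolution equations.
\item Deduce that $\langle F-F(p_0,t_0),e\rangle$ is constant for every $e$ in this fixed complement. This uses connectedness of $M$ and of the time interval, and the fact that the time derivative is $\langle\HH,e\rangle=0$.
\end{itemize}
Uniqueness of $\Pi$ follows because the image cannot lie in an $n$-plane, since $\HH\neq0$.
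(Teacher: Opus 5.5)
Your proof is correct. The reduction to Theorem~\ref{hc:cor:timePlanarity} on slabs $[s,t_1]$, followed by the limit $s\to-\infty$ giving $\Ah\equiv0$, matches the paper; you also handle the static alternative more explicitly than the paper does. The last step goes differently.

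The paper verifies $|\HH|g-h\ge(1-\sqrt c)|\HH|g>0$ and cites Naff's Proposition~2.5 to put each slice in an affine $(n+1)$-plane $\Pi_t$. Your Codazzi identity $(|\HH|\delta_{jk}-h_{jk})\nablaN_j\nu_1=0$, with invertibility from pinching, is exactly the content of that proposition, so you are reproving it.

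For time-independence, the paper proves a general fact: affine containment at time $s$ persists for $t\ge s$. It does this by applying a noncompact maximum principle, which uses spacetime properness, to $\pm\langle F-y,e_0\rangle-\epsilon\Phi-\kappa(t-s)$. It then uses $\dim\Pi_t=n+1$ to force $\Pi_t=\Pi_s$.

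You instead make $\nu_1$ parallel in time as well, so $E=(TM\oplus\R\nu_1)^\perp$ is a fixed subspace over all of spacetime. This is purely local and needs neither properness nor a maximum principle at this stage. The cost is a computation you left vague ("the same identity on the evolution equations"). It closes as follows:
\begin{itemize}
\item Once $\Ah=0$ and $\nablaN\nu_1=0$, the right side of $\nabla^\perp_t\HH=\Delta^\perp\HH+\langle\HH,\Avec_{ij}\rangle\Avec_{ij}$ is a multiple of $\nu_1$. Hence $\nabla^\perp_t\nu_1=0$.
\item For a section $e$ of $E$, the tangential part of $\partial_te$ is $-\langle e,\partial_i\HH\rangle$. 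This vanishes by the Weingarten equation together with $\nablaN\nu_1=0$.
\end{itemize}
A simpler alternative is to differentiate $\langle F(p,t)-F(p_0,t),e(t)\rangle=0$ in $t$. Since $\HH\perp E_t$ and the differences $F(p,t)-F(p_0,t)$ span $E_t^\perp$ (the slice cannot lie in an $n$-plane), this gives $\partial_te\in E_t$. Your aside about the Ricci equation is not needed.
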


The paper is organized as follows. Section~\ref{sec:prelim}
collects the preliminary results. Section~\ref{sec:theoremProof}
proves Theorem~\ref{thm:main} and Corollary~\ref{cor:collar}.
Section~\ref{sec:areaEntropy} proves Theorem~\ref{thm:areaAncient}.
Finally, Section~\ref{sec:planarity} proves
Theorems~\ref{hc:thm:local} and~\ref{hc:cor:timePlanarity},
and Corollary~\ref{hc:thm:ancient}.

\section{Preliminaries}\label{sec:prelim}
In this section we gather several preliminary results that will be used below. We begin by recording the geometric Poincaré inequality.
\newcommand{\PoincareRef}{\cite[Proposition~2.7]{Langford17}}
\newcommand{\StampacchiaRef}{\cite{Stampacchia66}}

\begin{proposition}[Geometric Poincar\'e inequality \PoincareRef]
\label{prop:poincare}
Let $\Gamma\subset\R^n$ be an open symmetric cone satisfying
\[
 \overline\Gamma\setminus\{0\}
 \subset\{z\in\R^n:z_1+\cdots+z_n>0\},
 \qquad \overline\Gamma\cap\mathrm{Cyl}=\varnothing,
\]
where $\mathrm{Cyl}\subset\R^n$ denotes the union of the nonzero cylindrical
rays: up to permutation, their elements have $k$ zero entries and $n-k$
equal positive entries, for some $0\le k\le n-1$.
Let $X:M^n\to\R^{n+1}$ be a smooth hypersurface immersion and
$u\in C_c^1(M)$ a nonnegative function such that the curvature vectors
on $\spt u$ form a compact subset of $\Gamma$. Then, for every $\beta>0$,
\begin{equation}\label{eq:poincare}
 \int_M |A|^2u^2\,d\mu
 \le\beta\int_M|\nabla u|^2\,d\mu
 +P(1+\beta^{-1})\int_M\frac{|\nabla A|^2}{H^2}u^2\,d\mu,
\end{equation}
where $P=P(n,\Gamma)>0$. 
\end{proposition}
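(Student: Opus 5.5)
This is a pointwise (one-hypersurface) statement, so I would prove it from the algebra of cones plus integration by parts. The plan is to start from Simons' identity, test it against $u^2$, and control the curvature term using a strict lower bound on a Rayleigh-type quotient of the cone. The hypothesis on $\Gamma$ is used exactly once: $\overline\Gamma$ avoids every cylindrical ray, including umbilic points and the hyperplane. So on the compact set $K\subset\Gamma$ of curvature vectors over $\spt u$, the algebraic expression appearing in Simons' identity is bounded below by a positive multiple of $|A|^2H^2$.

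Concretely, I would use the identity
\[
 \tfrac12\Delta |A|^2 - \tfrac12 \Delta \tfrac{|A|^2}{H^2}H^2
\]
or, more cleanly, the quantity $Z := H\,\tr(A^3)-|A|^4$ from
\[
 \tfrac12\Delta|A|^2=\langle A,\nabla^2H\rangle+|\nabla A|^2+Z .
\]
Following Huisken's and Langford's argument, I would pick a smooth, degree-zero, symmetric function $G(\kappa)$, for instance $\kappa_i$-weighted combinations. The requirement is that $\sum_{i<j}(\kappa_i-\kappa_j)^2\,\partial$-terms vanish only on cylindrical rays. Simpler still is the Huisken–Sinestrari formula
\[
 \sum_{i,j}\kappa_i\kappa_j(\kappa_i-\kappa_j)^2 = 2Z .
\]
Its weighted version $\sum_{i<j}(\kappa_i-\kappa_j)^2\cdot(\text{positive})$ is tied to the Codazzi commutator $\nabla_i\nabla_j h_{kl}$. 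The key algebraic step is to show that on $K$ one has $Z\ge\gamma|A|^4$ after a suitable choice of test quantity, or more precisely
\[
 \tfrac12\sum_{i,j}(\kappa_i-\kappa_j)^2\kappa_i\kappa_j\ \ge\ \gamma|A|^2H^2\ \text{on }K .
\]
This needs $\overline\Gamma\setminus\{0\}$ to lie in the positive-mean half-space and to miss the cylinders, since the left-hand side vanishes exactly when all nonzero $\kappa_i$ are equal. Compactness of $K$ (after normalizing by homogeneity) gives $\gamma=\gamma(n,\Gamma)>0$.

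Next comes the integral step. I would write $Z$ through the Codazzi commutator identity
\[
 \nabla_k\nabla_l h_{ij}-\nabla_i\nabla_j h_{kl}=h_{kl}h^2_{ij}-h_{ij}h^2_{kl}+\cdots,
\]
contract it against $\frac{h_{ij}}{H}$-type tensors times $u^2$, and integrate by parts. Each integration by parts moves one derivative either onto $u^2$, producing $|\nabla u|\,u\,|\nabla A|\,|A|/H$, or onto the curvature quotient, producing $|\nabla A|^2u^2/H^2\cdot|A|^2/H^2$-like terms. On $K$ we have $|A|\le CH$, so all ratios are bounded. Young's inequality then gives
\[
 \gamma\int|A|^2u^2\le \int |\nabla u|\,u\,\frac{|A||\nabla A|}{H}+C\int\frac{|\nabla A|^2}{H^2}u^2,
\]
and absorbing with weight $\beta$ yields the inequality with $P(1+\beta^{-1})$. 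Here $C$ depends on $n$ and on $\sup_K |A|/H$, which is controlled by $\Gamma$ through the mean-positivity condition.

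I expect the main obstacle to be the algebraic step: producing a tensor contraction whose integrated form equals a quantity that is strictly positive on $\overline\Gamma$ away from cylinders. A naive choice degenerates on umbilic or cylindrical directions, so I would follow Langford's construction with the quadratic form $\sum_{i,j}(\kappa_i-\kappa_j)^2\kappa_i\kappa_j$ and verify that it vanishes on the mean-positive sphere precisely on $\mathrm{Cyl}$.
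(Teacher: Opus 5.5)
\textbf{The algebraic step fails.} Your integration-by-parts scheme has the right shape, and it would prove the inequality in the special case where $\overline\Gamma\setminus\{0\}$ lies in the closed positive cone. But the positivity it rests on is false under the stated hypotheses. The cone $\Gamma$ is only required to be mean-convex and to miss $\mathrm{Cyl}$, so it may consist of curvature vectors with a negative entry. There $Z=H\tr(A^3)-|A|^4=\frac12\sum_{i,j}\kappa_i\kappa_j(\kappa_i-\kappa_j)^2$ need not be positive.

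For $n=2$ one has $Z=\kappa_1\kappa_2(\kappa_1-\kappa_2)^2$. At $\kappa=(-1,2)$, which is mean-convex and not cylindrical, $Z=-18$. A thin symmetric cone around the rays through $(-1,2)$ and $(2,-1)$ satisfies every hypothesis of the proposition, yet $Z<0$ on all of it. For $n=3$, $Z$ even vanishes at non-cylindrical mean-convex points: along $(t,2,3)$ it equals $-60$ at $t=-1$ and $6$ at $t=0$. So your assertion that $\sum_{i,j}\kappa_i\kappa_j(\kappa_i-\kappa_j)^2$ vanishes exactly on $\mathrm{Cyl}$ holds only where all $\kappa_i\ge0$, so that every summand is nonnegative.

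This is not a corner case. In the proof of Theorem~\ref{thm:main} with $G=-\kappa_1$, the inequality is applied on the support of $G_{\varepsilon,\sigma,+}$, where $\kappa_1\le-\varepsilon H/2<0$. For $n=2$ that is a region on which $Z<0$ throughout. Contracting the commutator form of Simons' identity against $A\otimes g$, or against $h_{ij}/H$-type tensors, just returns $\pm Z$. So this route cannot put $\gamma\int|A|^2u^2$ on the left.

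\textbf{The fix: test the reaction term against itself.} The cubic reaction term should be tested against itself, so that it becomes a sum of squares. Codazzi, the Ricci identity and Gauss give $\nabla_k\nabla_lh_{ij}-\nabla_i\nabla_jh_{kl}=C_{ijkl}$, with $C_{ijkl}=h_{kl}h^2_{ij}-h_{ij}h^2_{kl}+h_{kj}h^2_{il}-h_{il}h^2_{kj}$. In an eigenframe, $|C|^2=2\sum_{i,j}\kappa_i^2\kappa_j^2(\kappa_i-\kappa_j)^2$. This vanishes if and only if all nonzero $\kappa_i$ coincide, that is (given $H>0$) exactly on $\mathrm{Cyl}$. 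Compactness of $\overline\Gamma\cap S^{n-1}$ then yields $|C|^2\ge\gamma|A|^2H^4$ and $|A|\le cH$ on $\Gamma$, with $\gamma,c$ depending only on $n$ and $\Gamma$.

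Multiply the identity by $u^2C^{ijkl}/H^4$ and integrate by parts once. Since $C/H^4$ is homogeneous of degree $-1$ in $A$, one has $|C|/H^4\le c/H$ and $|\nabla(C/H^4)|\le c|\nabla A|/H^2$. Hence
\[
 \gamma\int|A|^2u^2\,d\mu\le c\int u|\nabla u|\frac{|\nabla A|}{H}\,d\mu+c\int\frac{|\nabla A|^2}{H^2}u^2\,d\mu,
\]
and Young's inequality gives \eqref{eq:poincare} with the factor $P(1+\beta^{-1})$.

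Equivalently, you may contract $\Delta A-\nabla^2H=HA^2-|A|^2A$ with $u^2(HA^2-|A|^2A)/H^4$. This uses $|HA^2-|A|^2A|^2=\sum_i\kappa_i^2(H\kappa_i-|A|^2)^2$, which also vanishes exactly on cylinders when $H>0$. This sum-of-squares device is, in essence, the argument behind the cited result of Langford; the paper itself only quotes that result.
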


We also use the parabolic consequence of the Michael--Simon inequality
\cite{MS73}. Let $J$ be a finite time interval. For a spatially compactly
supported function $w$ with $w(\cdot,t)\in W^{1,2}_0(M_t)$ for almost
every $t$ and with finite right-hand side below,
\begin{align}
 \left(\int_J\int |w|^{2(n+2)/n}\dmt\right)^{n/(n+2)}
 &\le C(n)
 \left(\mathop{\rm ess\,sup}_{t\in J}\int w^2\dmu\right)^{2/(n+2)}
 \notag\\
 &\quad\times
 \left(\int_J\int(|\nabla w|^2+H^2w^2)\dmt\right)^{n/(n+2)}
 \notag\\
 &\le C(n)\left[
 \mathop{\rm ess\,sup}_{t\in J}\int w^2\dmu
 +\int_J\int(|\nabla w|^2+H^2w^2)\dmt\right].
 \label{eq:sobolev}
\end{align}
For $n\ge3$, this follows from the spatial $L^{2n/(n-2)}$ inequality
by interpolation with $L^2$ and integration in time. For $n=2$, the
$L^1$ Michael--Simon inequality applied to $w^2$ gives
\begin{align*}
 \left(\int w^4\dmu\right)^{1/2}
 &\le C\int(2|w||\nabla w|+|H|w^2)\dmu\\
 &\le C\left(\int w^2\dmu\right)^{1/2}
 \left(\int(|\nabla w|^2+H^2w^2)\dmu\right)^{1/2}.
\end{align*}
Squaring and integrating yields \eqref{eq:sobolev}. Approximation
extends these calculations from smooth functions to the stated class.

\begin{lemma}[Stampacchia iteration; cf.\ \StampacchiaRef]
\label{lem:stampacchia}
Let $p>0$, $\omega>0$, $b>1$ and $C_*>0$. Suppose $a(k,j)\ge0$ is
nonincreasing in $k\ge k_0$ and $j\in\mathbb N\cup\{0\}$, and
\begin{equation}\label{eq:stampacchiaRecurrence}
 a(h,j+1)\le C_*b^j(h-k)^{-p}a(k,j)^{1+\omega}
\end{equation}
for every $h>k\ge k_0$ and $j\ge0$. If $a(k_0,0)\le a_0<\infty$,
then there is $C=C(p,\omega,b)>0$ such that, for every $d>0$,
\begin{equation}\label{eq:stampacchiaCondition}
 d^p\ge C C_*a_0^\omega
\end{equation}
implies $\lim_{j\to\infty}a(k_0+d,j)=0$.
\end{lemma}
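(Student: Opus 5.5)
This is the standard Stampacchia iteration lemma, and I would prove it by induction on $j$ along a sequence of levels increasing to $k_0+d$. Set $k_j=k_0+d(1-2^{-j})$ for $j\ge0$, so that $k_j\nearrow k_0+d$ and $k_{j+1}-k_j=d\,2^{-(j+1)}$. Write $a_j:=a(k_j,j)$. Applying \eqref{eq:stampacchiaRecurrence} with $h=k_{j+1}$ and $k=k_j$ gives
\[
 a_{j+1}\le C_*b^j\,2^{p(j+1)}d^{-p}\,a_j^{1+\omega}
 =C_*2^p d^{-p}\,(b2^p)^j\,a_j^{1+\omega}.
\]
This is a recursion of the form $a_{j+1}\le K B^j a_j^{1+\omega}$ with $B=b2^p>1$ and $K=C_*2^pd^{-p}$.

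The key step is to show by induction that $a_j\le a_0 B^{-j/\omega}$, provided $K a_0^\omega B^{1/\omega}\le1$. The case $j=0$ holds because $a(k_0,0)\le a_0$. For the inductive step,
\[
 a_{j+1}\le K B^j a_0^{1+\omega}B^{-j(1+\omega)/\omega}
 =a_0B^{-(j+1)/\omega}\cdot\left(K a_0^\omega B^{1/\omega}\right).
\]
The last factor is at most $1$ under the assumption, so the induction closes. That assumption reads $d^p\ge C_*2^pB^{1/\omega}a_0^\omega$, which is exactly \eqref{eq:stampacchiaCondition} with $C=2^p(b2^p)^{1/\omega}$, depending only on $p,\omega,b$. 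The main point needing care is this choice of the geometric decay rate $B^{-1/\omega}$: it is the rate that makes the exponent bookkeeping balance, after which the induction is routine.

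Finally, I would use monotonicity in $k$. Since $k_j\le k_0+d$ and $a$ is nonincreasing in $k$, we get $0\le a(k_0+d,j)\le a(k_j,j)=a_j\le a_0B^{-j/\omega}\to0$, because $B>1$. This proves the limit. If $a_0=0$, all terms vanish trivially, and the condition is satisfied for every $d$.
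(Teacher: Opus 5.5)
Your proof is correct and follows the paper's argument: the same levels $k_j=k_0+d(1-2^{-j})$, the same ratio $B=2^pb$, the inductive bound $a(k_j,j)\le a_0B^{-j/\omega}$, and monotonicity in $k$ at the end. You also arrive at the same constant $C=2^p(2^pb)^{1/\omega}$.
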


\begin{proof}
The assertion is immediate if $a_0=0$. Otherwise put
$k_j=k_0+d(1-2^{-j})$ and $B=2^pb$. The recurrence gives
\[
 a(k_{j+1},j+1)\le 2^pC_*d^{-p}B^j a(k_j,j)^{1+\omega}.
\]
If $2^pC_*d^{-p}a_0^\omega\le B^{-1/\omega}$, induction yields
$a(k_j,j)\le a_0B^{-j/\omega}$. Monotonicity in $k$ then proves
the conclusion. Thus $C=2^pB^{1/\omega}$ suffices.
\end{proof}

For the planarity estimates, use the normal-valued notation introduced
above. The notation $\nablaN$ includes the induced connections on
normal-valued tensors.\label{hc:sec:geometricInput}

\begin{lemma}[Local pinching inequalities
 {\cite[Lemma~2.4 and equation~(2.6), with Appendix~B]{LNZ25}}]
\label{hc:lem:input}
 Let $F:M^n\times I\to\R^N$, $n\ge2$ and
$N\ge n+1$, be a smooth mean curvature flow.
For $\delta_0\in(0,c_n)$,  there exist
$c_0=c_0(n,\delta_0)$ and $\sigma_0=\sigma_0(n,\delta_0)$ satisfying
\begin{equation}\label{hc:eq:parameters}
 \max\left\{\frac1n,c_n-\delta_0\right\}<c_0<c_n,
 \qquad 0<\sigma_0\le\frac18,
\end{equation}
with the following property. Suppose that \eqref{hc:eq:pinch} holds, and set
\begin{equation}\label{hc:eq:f}
 f=c_0|\HH|^2-|\Avec|^2.
\end{equation}
Then
\begin{equation}\label{hc:eq:fBounds}
 (c_0-c_n+\delta_0)|\HH|^2\le f\le c_0|\HH|^2,
\end{equation}
and $(\partial_t-\Delta)f\ge0$ throughout the region. On $\{f>0\}$,
\begin{align}
 (\partial_t-\Delta)f
 &\ge 2\left(|h|^2+\frac{|\Ah|^2}{nc_0-1}\right)f
 +2\left(1-\frac{c_0(n+2)}3\right)|\nablaN\Avec|^2,
 \label{hc:eq:Lf}\\
 (\partial_t-\Delta)|\Ah|^2
 &\le(1-2\sigma_0)\frac{|\Ah|^2}{f}(\partial_t-\Delta)f.
 \label{hc:eq:La}
\end{align}
\end{lemma}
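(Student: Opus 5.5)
The plan is to derive each assertion of the lemma from the evolution equations of $|\Avec|^2$, $|\HH|^2$ and $|\Ah|^2$ in arbitrary codimension, following Andrews--Baker, Naff \cite{Naff22} and the appendix of \cite{LNZ25}.

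\emph{Choice of $c_0$ and the bounds \eqref{hc:eq:fBounds}.} First fix $c_0$ strictly between $\max\{1/n,c_n-\delta_0\}$ and $c_n$; this interval is nonempty because $c_n>1/n$ and $\delta_0>0$. Then \eqref{hc:eq:fBounds} is pure algebra. The upper bound $f\le c_0|\HH|^2$ holds because $|\Avec|^2\ge0$. The lower bound is \eqref{hc:eq:pinch} rewritten: $f\ge c_0|\HH|^2-(c_n-\delta_0)|\HH|^2$, and the coefficient is positive. In particular $f>0$ wherever $\HH\ne0$. Where $\HH=0$, the pinching gives $\Avec=0$; at such points the reaction terms below vanish, and $(\partial_t-\Delta)f\ge0$ follows from the gradient term alone.

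\emph{The inequality \eqref{hc:eq:Lf}.} Write the standard evolutions
\[
(\partial_t-\Delta)|\Avec|^2=-2|\nablaN\Avec|^2+2\sum|\langle\Avec_{ij},\Avec_{kl}\rangle|^2+2|R^\perp|^2,
\qquad
(\partial_t-\Delta)|\HH|^2=-2|\nablaN\HH|^2+2\sum|\langle\Avec_{ij},\HH\rangle|^2 .
\]
For the gradient terms, use the Kato-type inequality $|\nablaN\HH|^2\le\frac{3}{n+2}|\nablaN\Avec|^2$, which comes from the Codazzi symmetry. This gives the coefficient $2\bigl(1-\frac{c_0(n+2)}{3}\bigr)$, and it is positive because $c_0<c_n\le\frac{3(n+1)}{2n(n+2)}<\frac{3}{n+2}$.

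For the reaction terms, decompose along $\nu_1$ and $\Ah$ as in \eqref{hc:eq:decomposition}. Then follow the Andrews--Baker estimate, in Naff's refined form, to show that
\[
2c_0\sum|\langle\Avec_{ij},\HH\rangle|^2-2\sum|\langle\Avec_{ij},\Avec_{kl}\rangle|^2-2|R^\perp|^2\ \ge\ 2\Bigl(|h|^2+\frac{|\Ah|^2}{nc_0-1}\Bigr)f .
\]
This is the step where the precise constant $c_n=\min\{\frac4{3n},\frac{3(n+1)}{2n(n+2)}\}$ enters: one must bound the mixed terms $|\Ah|^2|h|^2$, $|\Ah|^4$ and the normal curvature $|R^\perp|^2$ using $|\HH|^2<|\Avec|^2/(c_n-\delta_0)$-type information.

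\emph{The inequality \eqref{hc:eq:La}.} This is the main obstacle. Compute $(\partial_t-\Delta)|\Ah|^2$ as in Naff. The reaction part is bounded by roughly $2|\Ah|^2\bigl(|h|^2+\frac{|\Ah|^2}{nc_0-1}\bigr)$, with a little room. The gradient part contains $-2|\nablaN\Ah|^2$ together with cross terms involving $\nablaN\nu_1$. These cross terms are controlled by $|\nablaN\Avec|^2/|\HH|^2$ times $|\Ah|^2$, and hence absorbed using $|\Ah|^2/f\lesssim |\Ah|^2/|\HH|^2$ and the positive gradient term in \eqref{hc:eq:Lf}.

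The slack available in both the reaction and the gradient comparison is a fixed positive amount determined by $c_n-c_0$ and by $\delta_0$. Shrinking $\sigma_0$ below this slack, and also below $1/8$, yields
\[
(\partial_t-\Delta)|\Ah|^2\le(1-2\sigma_0)\frac{|\Ah|^2}{f}(\partial_t-\Delta)f .
\]
I expect verifying the quantitative absorption of the $\nablaN\nu_1$ cross terms, uniformly with a definite $\sigma_0(n,\delta_0)$, to be the delicate part. For that I would rely directly on the computations in \cite[Appendix~B]{LNZ25}.
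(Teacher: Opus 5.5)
Your route is the paper's route. The paper gives no argument for this lemma and imports it wholesale from \cite{LNZ25} (Lemma~2.4, equation~(2.6), Appendix~B). You likewise reduce everything to the Andrews--Baker/Naff evolution equations and defer the substance of \eqref{hc:eq:La} to the same appendix. However, several of the steps you do write out are wrong as stated.

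\emph{The Codazzi gradient inequality is reversed.} It reads $|\nablaN\Avec|^2\ge\frac{3}{n+2}|\nablaN\HH|^2$, i.e.\ $|\nablaN\HH|^2\le\frac{n+2}{3}|\nablaN\Avec|^2$. Only this form produces the coefficient $2\bigl(1-\frac{c_0(n+2)}{3}\bigr)$ in \eqref{hc:eq:Lf}.

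\emph{The reaction inequality in \eqref{hc:eq:Lf} uses no pinching at all.} Also, the inequality $|\HH|^2<|\Avec|^2/(c_n-\delta_0)$ you invoke is the reverse of \eqref{hc:eq:pinch}. Write $\mathring h=h-\frac{|\HH|}{n}g$. Bound the mixed inner-product and commutator terms by $4|\mathring h|^2|\Ah|^2$ and the $\Ah$--$\Ah$ terms by $\frac32|\Ah|^4$ (Li--Li). Then substitute the exact relation $(c_0-\frac1n)|\HH|^2=|\mathring h|^2+|\Ah|^2+f$. This gives
\[
\begin{aligned}
&2c_0\sum|\langle\Avec_{ij},\HH\rangle|^2-2\sum|\langle\Avec_{ij},\Avec_{kl}\rangle|^2-2|R^\perp|^2\\
&\quad\ge2\Bigl(|h|^2+\frac{|\Ah|^2}{nc_0-1}\Bigr)f+\Bigl(\frac{2}{nc_0-1}-6\Bigr)|\mathring h|^2|\Ah|^2+\Bigl(\frac{2}{nc_0-1}-3\Bigr)|\Ah|^4,
\end{aligned}
\]
valid wherever $\HH\ne0$. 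So only $\frac1n<c_0\le\frac4{3n}$ is used here, not the full constant $c_n$.

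\emph{The absorption step for \eqref{hc:eq:La} points the wrong way.} The other branch $\frac{3(n+1)}{2n(n+2)}$ of $c_n$ is used in \eqref{hc:eq:La}, in exactly the gradient absorption you leave vague. You want to absorb cross terms of size $C\,|\Ah|^2|\nablaN\Avec|^2/|\HH|^2$ into the term
\[
(1-2\sigma_0)\frac{|\Ah|^2}{f}\cdot2\Bigl(1-\frac{c_0(n+2)}{3}\Bigr)|\nablaN\Avec|^2
\]
supplied by \eqref{hc:eq:Lf}. For that you need a \emph{lower} bound for $|\Ah|^2/f$, i.e.\ an upper bound on $f$ in terms of $|\HH|^2$. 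One such bound is $f\le(c_0-\frac1n)|\HH|^2$, which follows from $|\Avec|^2\ge|\HH|^2/n$. The bound $|\Ah|^2/f\lesssim|\Ah|^2/|\HH|^2$ that you cite cannot absorb anything.

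Moreover, with an unspecified $C$ the absorption does not close. The explicit constant produced by the $\nablaN\nu_1$ cross terms has to be strictly beaten. That is what the condition $c_0<\frac{3(n+1)}{2n(n+2)}$ is for, and it is strictly stronger than the positivity condition $c_0<\frac{3}{n+2}$ you check. So your sketch does not prove the one genuinely delicate estimate; it rests entirely on the citation, which is also all the paper does.
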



\section{Proof of Theorem~\ref{thm:main} and Corollary \ref{cor:collar}}\label{sec:theoremProof}

\medskip

We briefly outline the proof of Theorem~\ref{thm:main}.
The proof follows Langford's localized Huisken--Stampacchia
scheme, in which the geometric Poincar\'e inequality is used
to absorb the unfavorable curvature reaction term in the
evolution of the pinching quantity $G_{\varepsilon,\sigma}$,
leading to a localized energy estimate and a Stampacchia
iteration. In Langford's argument, the higher moments needed
for this iteration are controlled using a pointwise upper
bound for $H$ on the initial slice and lateral annulus.

Our main modification is to obtain these higher moments
without such a pointwise bound.
The main difference in our argument lies in the control of
the higher moments required for the Stampacchia iteration.
We retain the favorable $H^2v^2$ term arising from the
area evolution in the localized energy inequality
\eqref{eq:damped}. Choosing $\sigma p=q$ and using the
boundedness of the normalized pinching quantity
$G_{\varepsilon,0,+}$, we first obtain the lower-order
integral estimate \eqref{eq:low} from the prescribed
initial and lateral $L^q$ data. We then introduce a
spacetime cutoff vanishing near the initial time.
The resulting inequality \eqref{eq:gainstart} provides
a gain of two powers of $H$, which, together with
interpolation and absorption, yields the higher-moment
estimates \eqref{eq:gainH}--\eqref{eq:gain}. In particular,
these estimates provide the higher integrability needed
to control the curvature reaction term in the truncated
energy inequality \eqref{eq:truncated} and close the
Stampacchia iteration \eqref{eq:levelrecurrence}.
This replaces the pointwise boundary curvature bound
used in Langford's argument by an integral bootstrap,
at the expense of a positive waiting time.

\begin{proof}[Proof of Theorem~\ref{thm:main}]
	By parabolic scaling, we may assume $R=1$, so that $T=1/(2n)$.
	Write $R_0=\lambda-\delta$.

	Set
	$S_{m+1}=\kappa_1+\cdots+\kappa_{m+1}$. The standard evolution for the mean curvature flow gives
	\[
	(\partial_t-\Delta)(S_{m+1}-\alpha|A|)
	\ge |A|^2(S_{m+1}-\alpha|A|)
	\]
	in the weak barrier sense. Applying the maximum principle and using \eqref{eq:cone}, we obtain
	\[
	S_{m+1}\ge\alpha|A|
	\]
	throughout $B_\lambda\times[0,T)$. Thus either $A\equiv 0$, or
	$\kappa_{m+1}>0$ and all remaining principal curvatures are positive. Since the theorem is trivial in the case \(A\equiv 0\), we may assume henceforth that \(|A|>0\).
	In particular,
	\begin{equation}\label{eq:HcontrolsA}
		H\ge \alpha|A| >0.
	\end{equation}

	For the two estimates respectively, take
	\[
	G=-\kappa_1\qquad\text{or}\qquad
	G=\kappa_n-\frac1{n-m}H,
	\]
	and use the notations
	\begin{equation}\label{eq:Geps}
		\begin{gathered}
			G_{\varepsilon}=G-\varepsilon\left(H-\frac\alpha2|A|\right),
			\qquad G_{\varepsilon,\sigma}=G_{\varepsilon}H^{\sigma-1},\\
			G_{\varepsilon,\sigma,+}=\max\{G_{\varepsilon,\sigma},0\},
			\qquad 0<\sigma<1.
		\end{gathered}
	\end{equation}
	In particular,
	\[
	G_{\varepsilon,0,+}=\frac{(G_{\varepsilon})_+}{H}.
	\]
	Hence, we have
	\begin{equation}\label{eq:normalized}
		\frac H2\le H-\frac\alpha2|A|\le H,\qquad
		0\le G_{\varepsilon,0,+}\le c(n,\alpha),\qquad
		G_{\varepsilon,\sigma,+}=G_{\varepsilon,0,+}H^\sigma.
	\end{equation}
	
		The standard calculation
	(see, for example, \cite[Proposition~12.9]{ACGL20} and
	\cite[Section~3]{Langford17}) gives a constant
	$\gamma=\gamma(n,\alpha,\varepsilon)>0$ such that
	\begin{equation}\label{eq:diff}
		\frac{(\partial_t-\Delta)G_{\varepsilon,\sigma}}
		{G_{\varepsilon,\sigma}}
		\le \sigma|A|^2-\gamma\frac{|\nabla A|^2}{H^2}
		+\gamma^{-1}
		\frac{|\nabla G_{\varepsilon,\sigma}|^2}
		{G_{\varepsilon,\sigma}^2}.
	\end{equation}
	on $\{\,G_{\varepsilon,\sigma}>0\}$. Here and below, all differential inequalities are understood in the distributional sense.
	
	\smallskip\noindent\emph{\textbf{The $L^2$-estimate.}}
	Let $\psi$ be a standard ambient spacetime cutoff.
	Set
	\[
	v=G_{\varepsilon,\sigma,+}^{p/2}.
	\]
	For $p\ge6(1+\gamma^{-1})$, \eqref{eq:diff} and
	Young's inequality give
	\begin{align}
		(\partial_t-\Delta)(\psi^2v^2)
		&\le 2|\psi(\partial_t-\Delta)\psi|v^2
		+6|\nabla\psi|^2v^2
		-\frac43\psi^2|\nabla v|^2\notag\\
		&\quad-\gamma p\psi^2v^2\frac{|\nabla A|^2}{H^2}
		+\sigma p|A|^2\psi^2v^2.
		\label{eq:product}
	\end{align}
	It follows that
	\begin{align}
		\frac d{dt}\int\psi^2v^2\dmu
		&+\int\psi^2H^2v^2\dmu
		+\frac43\int\psi^2|\nabla v|^2\dmu
		+\gamma p\int\psi^2v^2\frac{|\nabla A|^2}{H^2}\dmu\notag\\
		&\le\sigma p\int\psi^2|A|^2v^2\dmu
		+\int\bigl(2|\psi(\partial_t-\Delta)\psi|
		+6|\nabla\psi|^2\bigr)v^2\dmu,
		\label{eq:rawenergy}
	\end{align}
	where we have used the
	area evolution of mean curvature flow
	\begin{equation}\label{eq:area}
		\partial_t d\mu_t=-H^2d\mu_t.
	\end{equation}
	Apply \eqref{eq:poincare} to $u=\psi v$ with $\beta=p^{-1/2}$ to obtain
	\begin{align}\label{eq:gpy2}
		\sigma p\int\psi^2|A|^2v^2\,d\mu
		&\le2\sigma p^{1/2}\int\psi^2|\nabla v|^2\,d\mu
		+2\sigma p^{1/2}\int|\nabla\psi|^2v^2\,d\mu\notag\\
		&\quad+\sigma pP(1+p^{1/2})
		\int\psi^2v^2\frac{|\nabla A|^2}{H^2}\,d\mu.
	\end{align}
	Choose
	\[
	L\ge\max\{2,\,6(1+\gamma^{-1})\},
	\qquad
	0<\ell\le\min\left\{\frac16,\frac{\gamma}{4P}\right\}.
	\]
	If
	\begin{equation}\label{eq:admissible}
		p\ge L,\qquad 0<\sigma<1,\qquad
		\sigma\le\ell p^{-1/2},
	\end{equation}
	then
	\[
	2\sigma p^{1/2}\le\frac13
	\]
	and
	\[
	\sigma pP(1+p^{1/2})
	\le 2\sigma p^{3/2}P
	\le 2\ell pP
	\le\frac{\gamma p}{2}.
	\]
	Combining \eqref{eq:rawenergy} and \eqref{eq:gpy2},
	we therefore obtain
	\begin{align}
		\frac{d}{dt}\int\psi^2v^2\,d\mu_t
		&+\int\psi^2H^2v^2\,d\mu_t
		+\int\psi^2|\nabla v|^2\,d\mu_t\notag\\
		&+\frac{\gamma p}{2}
		\int\psi^2v^2\frac{|\nabla A|^2}{H^2}\,d\mu_t
		\notag\\
		&\le
		\int\left(
		2|\psi(\partial_t-\Delta)\psi|
		+\frac{19}{3}|\nabla\psi|^2
		\right)v^2\,d\mu_t.
		\label{eq:absorbed}
	\end{align}
	Since
	\[
	|\nabla(\psi v)|^2
	\le 2\psi^2|\nabla v|^2
	+2|\nabla\psi|^2v^2,
	\]
	we have
	\[
	\int\psi^2|\nabla v|^2\,d\mu_t
	\ge
	\frac12\int|\nabla(\psi v)|^2\,d\mu_t
	-
	\int|\nabla\psi|^2v^2\,d\mu_t.
	\]
	Discarding the nonnegative term involving $|\nabla A|^2$
	in \eqref{eq:absorbed} yields
	\begin{align}
		\frac{d}{dt}\int\psi^2v^2\,d\mu_t
		&+\frac12\int|\nabla(\psi v)|^2\,d\mu_t
		+\int\psi^2H^2v^2\,d\mu_t\notag\\
		&\le
		\int\left(
		2|\psi(\partial_t-\Delta)\psi|
		+\frac{22}{3}|\nabla\psi|^2
		\right)v^2\,d\mu_t.
		\label{eq:damped}
	\end{align}
	
	Choose a time-independent ambient cutoff
	$\zeta\in C_c^\infty(B_\lambda)$ such that
	\[
	0\le\zeta\le1,\qquad
	\zeta=1\ \text{on }B_{R_0},\qquad
	|D\zeta|\le c\delta^{-1},\qquad
	|D^2\zeta|\le c\delta^{-2},
	\]
	where $R_0=\lambda-\delta$, and set $\psi=\zeta\circ X$. Then
	\begin{equation}\label{eq:cutoff}
		(\partial_t-\Delta)\psi
		=\partial_t\zeta-\tr_{TM_t}D^2\zeta .
	\end{equation}
	Hence,
	\[
	(\partial_t-\Delta)\psi
	=-\operatorname{tr}_{TM_t}D^2\zeta.
	\]
	Consequently,
	\[
	|\psi(\partial_t-\Delta)\psi|
	+|\nabla\psi|^2
	\le c(n)\delta^{-2}
	\mathbf{1}_{B_\lambda\setminus B_{R_0}}.
	\]
	Discarding the nonnegative terms on the left-hand side of
	\eqref{eq:damped}, we obtain
	\[
	\frac{d}{dt}\int\psi^2v^2\,d\mu_t
	\le
	c(n)\delta^{-2}
	\int_{B_\lambda\setminus B_{R_0}}v^2\,d\mu_t.
	\]
	Integrating from $0$ to $t<T$ yields
	\[
	\int_{B_{R_0}}v^2\,d\mu_t
	\le
	\int_{B_\lambda}v^2\,d\mu_0
	+c(n)\delta^{-2}
	\int_0^t\int_{B_\lambda\setminus B_{R_0}}
	v^2\,d\mu_\xi\,d\xi.
	\]
It follows that
	\begin{align*}
		\int_0^T\int_{B_{R_0}}v^2\,d\mu_t\,dt
		&\le
		T\int_{B_\lambda}v^2\,d\mu_0\\
		&\quad+
		c(n)\delta^{-2}
		\int_0^T(T-\xi)
		\int_{B_\lambda\setminus B_{R_0}}
		v^2\,d\mu_\xi\,d\xi.
	\end{align*}
	Since $T=1/(2n)$, we conclude that
	\begin{equation}\label{eq:lowgeneral}
		\int_0^T\int_{B_{R_0}}v^2\,d\mu_t\,dt
		\le
		\frac1{2n}\int_{B_\lambda}v^2\,d\mu_0
		+\frac{c(n)}{2n}\delta^{-2}
		\int_0^T\int_{B_\lambda\setminus B_{R_0}}
		v^2\,d\mu_t\,dt.
	\end{equation}
	
	We now choose $p$ and $\sigma$ such that $\sigma p=q$.
	Set
	\begin{equation}\label{eq:Lambda}
		\Lambda=
		\left(
		\frac1{2n}\int_{B_\lambda}H^q\,d\mu_0
		+\frac{100}{\delta^2}
		\int_0^T\int_{B_\lambda\setminus B_{R_0}}
		H^q\,d\mu_t\,dt
		\right)^{1/q}.
	\end{equation}
	By the definition of $\cE_q$,
	\[
	\cE_q\le\Lambda^q\le100\cE_q.
	\]
	On the other hand, \eqref{eq:normalized} implies
	\[
	v^2=G_{\varepsilon,\sigma,+}^{p}
	=G_{\varepsilon,0,+}^{p}H^{\sigma p}
	\le c(n,\alpha)^p H^q.
	\]
	Substituting this into \eqref{eq:lowgeneral}
	and using \eqref{eq:Lambda}, we obtain
	\begin{equation}\label{eq:low}
		\int_0^T\int_{B_{R_0}}
		G_{\varepsilon,\sigma,+}^{p}\,d\mu_t\,dt
		\le c\Lambda^q,
		\qquad \sigma p=q,
	\end{equation}
	where $c$ depends only on $n,\alpha$ and the chosen exponent $p$.

	\smallskip\noindent\emph{\textbf{Higher-order integral estimates.}}
	Set
	\[
	r_* =\vartheta R_0,\qquad
	r_0=\frac{1+\vartheta}{2}R_0,\qquad
	\cQ_0=B_{r_0}\times[\tau/2,T).
	\]
    \textbf{Next, 
	we  establish the higher-moment estimates needed for the iteration.}
	Let $(\widetilde p,\widetilde\sigma)$ satisfy
	\eqref{eq:admissible}, with
	\[
	\widetilde p\ge p,\qquad
	q_1:=\widetilde\sigma\widetilde p\ge q.
	\]
	\textbf{We claim that
		\begin{align}
			\iint_{\cQ_0}
			H^2G_{\varepsilon,\widetilde\sigma,+}^{\widetilde p}\dmt
			&\le cD_{\tau}^{q_1+2-q}\Lambda^q,
			\label{eq:gainH}\\
			\iint_{\cQ_0}
			G_{\varepsilon,\widetilde\sigma,+}^{\widetilde p}\dmt
			&\le cD_{\tau}^{q_1-q}\Lambda^q.
			\label{eq:gain}
	\end{align}}

	To prove \eqref{eq:gainH}, 	choose a smooth ambient spacetime cutoff
	$\zeta(x,t)=\varphi(x)\eta(t)$, where
	$\varphi\in C_c^\infty(B_{R_0})$ is equal to one on $B_{r_0}$,
	and $0\le\eta\le1$ vanishes for $t\le\tau/4$ and equals one
	for $t\ge\tau/2$. Set $\chi=\zeta(X,t)$.
	By \eqref{eq:cutoff}, we may arrange that
	\begin{equation}\label{eq:chibounds}
		|\nabla\chi|\le cD_{\tau},\qquad
		|(\partial_t-\Delta)\chi|\le cD_{\tau}^2.
	\end{equation}
	In particular, $\chi=1$ on $\cQ_0$.
	Choose an integer $N\ge1$ such that
	$2N\ge q_1+2-q$, and apply \eqref{eq:damped} with
	$(p,\sigma)$ replaced by
	$(\widetilde p,\widetilde\sigma)$ and $\psi=\chi^N$.
	Since $\chi$ vanishes near $t=0$ and
	\[
	|\chi^N(\partial_t-\Delta)\chi^N|
	+|\nabla(\chi^N)|^2
	\le c(N)D_{\tau}^2\chi^{2N-2},
	\]
	integration in time yields
	\begin{equation}\label{eq:gainstart}
		\iint\chi^{2N}
		G_{\varepsilon,0,+}^{\widetilde p}H^{q_1+2}\dmt
		\le cD_{\tau}^2\iint\chi^{2N-2}
		G_{\varepsilon,0,+}^{\widetilde p}H^{q_1}\dmt.
	\end{equation}
	
	Suppose first that $q_1>q$, and set
	\[
	\xi=\frac{q_1-q}{q_1+2-q}.
	\]
	Then
	\begin{align}
		&\chi^{2N-2}
		G_{\varepsilon,0,+}^{\widetilde p}H^{q_1}
		\notag\\
		&\quad=
		\left(
		\chi^{2N}G_{\varepsilon,0,+}^{\widetilde p}
		H^{q_1+2}
		\right)^\xi
		\left(
		G_{\varepsilon,0,+}^{p}H^q
		\right)^{1-\xi}
		\notag\\
		&\qquad\quad\times
		\chi^{2N(1-\xi)-2}
		G_{\varepsilon,0,+}^{(\widetilde p-p)(1-\xi)}.
		\label{eq:factorization}
	\end{align}
	Since $2N(1-\xi)-2\ge0$ and $\widetilde p\ge p$,
	$\chi^{2N(1-\xi)-2}
		G_{\varepsilon,0,+}^{(\widetilde p-p)(1-\xi)}$ are bounded by
	\eqref{eq:normalized}. Young's inequality therefore gives
	\begin{align}
		cD_{\tau}^2\chi^{2N-2}
		G_{\varepsilon,0,+}^{\widetilde p}H^{q_1}
		&\le
		\frac12\chi^{2N}
		G_{\varepsilon,0,+}^{\widetilde p}H^{q_1+2}
		\notag\\
		&\quad+
		cD_{\tau}^{q_1+2-q}
		G_{\varepsilon,0,+}^{p}H^q.
		\label{eq:younggain}
	\end{align}
	When $q_1=q$, the same conclusion follows directly from
	$\widetilde p\ge p$ and \eqref{eq:normalized}.
	Substituting into \eqref{eq:gainstart} and applying \eqref{eq:low} proves
	\eqref{eq:gainH}.
	
	To obtain \eqref{eq:gain}, we split $\cQ_0$ into
	$\{H\ge D_{\tau}\}$ and $\{H<D_{\tau}\}$. On the former,
	\[
	G_{\varepsilon,\widetilde\sigma,+}^{\widetilde p}
	\le D_{\tau}^{-2}H^2
	G_{\varepsilon,\widetilde\sigma,+}^{\widetilde p},
	\]
	while on the latter,
	\[
	G_{\varepsilon,\widetilde\sigma,+}^{\widetilde p}
	\le cD_{\tau}^{q_1-q}
	G_{\varepsilon,\sigma,+}^{p}.
	\]
	Thus \eqref{eq:gain} follows from
	\eqref{eq:gainH} and \eqref{eq:low}.

	We now fix
	\begin{equation}\label{eq:choosep}
		\rho=n+2,\qquad
		p\ge\max\left\{
		L,\ 2(q+2),\
		\frac{\rho(q+2)^2}{\ell^2}
		\right\},
		\qquad
		\sigma=\frac qp,\qquad
		\sigma'=\frac{q+2}{p}.
	\end{equation}
	The pairs $(p,\sigma)$, $(p\rho,\sigma)$, and
	$(p\rho,\sigma')$ satisfy \eqref{eq:admissible}.
	Consequently, \eqref{eq:gain} gives
	\begin{align}
		\iint_{\cQ_0}
		G_{\varepsilon,\sigma,+}^{p\rho}\dmt
		&\le cD_{\tau}^{q(\rho-1)}\Lambda^q,
		\label{eq:I0}\\
		\iint_{\cQ_0}
		G_{\varepsilon,\sigma',+}^{p\rho}\dmt
		&\le cD_{\tau}^{(q+2)\rho-q}\Lambda^q.
		\label{eq:I2}
	\end{align}

	\smallskip\noindent\emph{\textbf{From $L^2$ to $L^\infty$.}}
	For $k>0$, define
	\begin{equation}\label{eq:vk}
		v_k=(G_{\varepsilon,\sigma}-k)_+^{p/2}.
	\end{equation}
	We use the following notation:
	\begin{equation}\label{eq:levelsets}
		\begin{gathered}
			A_{k,r,s}
			=\{(x,t):s<t<T,\ X(x,t)\in B_r,\ v_k(x,t)>0\},\\
			u(k,r,s)=\iint_{A_{k,r,s}}v_k^2\dmt,\qquad
			a(k,r,s)=\iint_{A_{k,r,s}}\dmt.
		\end{gathered}
	\end{equation}
	For $h>k$, $r>0$, and $0\le s'<T$, we have
	\begin{equation}\label{eq:levelbasic}
		(h-k)^p a(h,r,s')\le u(k,r,s').
	\end{equation}
	
	We next derive a truncated energy estimate.
	Fix $0<r<R$ and $0\le s_0<s_1<T$.
	Choose a smooth ambient spacetime cutoff
	$\zeta(x,t)=\varphi(x)\eta(t)$, where
	$\varphi\in C_c^\infty(B_R)$ and $\eta$ satisfy
	\[
	\begin{gathered}
		0\le\varphi,\eta\le1,\qquad
		\varphi=1\ \text{on }B_r,\\
		\eta=0\ \text{for }t\le s_0,\qquad
		\eta=1\ \text{for }t\ge s_1,
	\end{gathered}
	\]
	with
	\[
	|D\varphi|\le\frac{c}{R-r},\qquad
	|D^2\varphi|\le\frac{c}{(R-r)^2},\qquad
	|\eta'|\le\frac{c}{s_1-s_0}.
	\]
	Set $\psi=\zeta(X,t)$. By \eqref{eq:cutoff},
	\[
	(\partial_t-\Delta)\psi
	=\partial_t\zeta-\operatorname{tr}_{TM_t}D^2\zeta,
	\]
	and hence
	\[
	|\psi(\partial_t-\Delta)\psi|+|\nabla\psi|^2
	\le c(n)\left(
	\frac1{(R-r)^2}+\frac1{s_1-s_0}
	\right).
	\]
	In particular, $\psi$ has compact spatial support,
	vanishes for $t\le s_0$, and equals one on
	$B_r\times[s_1,T)$.
	
	Arguing as in the derivation of \eqref{eq:rawenergy},
	we obtain from \eqref{eq:diff}
	\begin{align}
		\sup_t\int(\psi v_k)^2\,d\mu_t
		&+\iint\left(
		|\nabla(\psi v_k)|^2
		+H^2(\psi v_k)^2
		\right)\dmt\notag\\
		&\le c\iint
		\left(
		|\psi(\partial_t-\Delta)\psi|
		+|\nabla\psi|^2
		\right)v_k^2\dmt\notag\\
		&\quad+c\iint_{\{G_{\varepsilon,\sigma}>k\}}
		\psi^2H^2G_{\varepsilon,\sigma,+}^p\dmt.
		\label{eq:truncated}
	\end{align}
	Indeed, on $\{G_{\varepsilon,\sigma}>k\}$, \eqref{eq:diff} give
\begin{equation}\label{eq:label}
    \begin{aligned}
&(\partial_t-\Delta)
 (G_{\varepsilon,\sigma}-k)^p\\
&\quad\le
 p\sigma|A|^2G_{\varepsilon,\sigma}
 (G_{\varepsilon,\sigma}-k)^{p-1}
 -p(p-1-\gamma^{-1})
 (G_{\varepsilon,\sigma}-k)^{p-2}
 |\nabla G_{\varepsilon,\sigma}|^2.
\end{aligned}
\end{equation}
For sufficiently large $p$, the last term is bounded
above by $-2|\nabla v_k|^2$.
Moreover, since $\sigma p=q$ and
$|A|\le\alpha^{-1}H$, we have
\[
p\sigma|A|^2G_{\varepsilon,\sigma}
(G_{\varepsilon,\sigma}-k)^{p-1}
\le q\alpha^{-2}H^2G_{\varepsilon,\sigma,+}^p.
\]
Multiplying \eqref{eq:label} by $\psi^2$,
integrating with respect to the evolving measure,
and using \eqref{eq:area} and Young's inequality
yields \eqref{eq:truncated}.

	We now apply Stampacchia iteration to obtain a pointwise bound.
	For $j\ge0$, set
	\begin{equation}\label{eq:Qj}
		\begin{gathered}
			r_j=\vartheta R_0+2^{-j-1}(1-\vartheta)R_0,\qquad
			t_j=\tau-2^{-j-1}\tau,\\
			\cQ_j=B_{r_j}\times[t_j,T).
		\end{gathered}
	\end{equation}
	Observe that
	\[
	r_j-r_{j+1}=2^{-j-2}(1-\vartheta)R_0,\qquad
	t_{j+1}-t_j=2^{-j-2}\tau.
	\]
	Choose a smooth ambient spacetime cutoff $\psi_j$ supported
	spatially in $B_{r_j}$, vanishing for $t\le t_j$, and equal
	to one on $\cQ_{j+1}$. As in \eqref{eq:cutoff}, we may arrange
	that
	\begin{equation}\label{eq:psij}
		|\psi_j(\partial_t-\Delta)\psi_j|
		+|\nabla\psi_j|^2
		\le c4^jD_{\tau}^2.
	\end{equation}
	
	Since $\cQ_j\subset\cQ_0$, H\"older's inequality and
	\eqref{eq:I0} give
	\begin{align}
		u(k,r_j,t_j)
		&\le
		\left(
		\iint_{\cQ_0}G_{\varepsilon,\sigma,+}^{p\rho}\dmt
		\right)^{1/\rho}
		a(k,r_j,t_j)^{1-1/\rho}\notag\\
		&\le
		cD_{\tau}^{q(1-1/\rho)}\Lambda^{q/\rho}
		a(k,r_j,t_j)^{1-1/\rho}.
		\label{eq:holder0}
	\end{align}
	Similarly, by \eqref{eq:shift} and \eqref{eq:I2},
	\begin{align}
		&\iint_{A_{k,r_j,t_j}}
		H^2G_{\varepsilon,\sigma,+}^p\dmt\notag\\
		&\quad=
		\iint_{A_{k,r_j,t_j}}
		G_{\varepsilon,\sigma',+}^p\dmt\notag\\
		&\quad\le
		cD_{\tau}^{q+2-q/\rho}\Lambda^{q/\rho}
		a(k,r_j,t_j)^{1-1/\rho}.
		\label{eq:holder2}
	\end{align}
	
	Applying \eqref{eq:truncated} and \eqref{eq:sobolev}
	to $w=\psi_jv_k$, and using \eqref{eq:psij},
	\eqref{eq:holder0}, and \eqref{eq:holder2}, we obtain
	\begin{equation}\label{eq:levelsobolev}
		\left(
		\iint_{\cQ_{j+1}}
		v_k^{2(n+2)/n}\dmt
		\right)^{n/(n+2)}
		\le
		c4^jD_{\tau}^{q(1-1/\rho)+2}
		\Lambda^{q/\rho}
		a(k,r_j,t_j)^{1-1/\rho}.
	\end{equation}
	On the other hand, for $h>k$, H\"older's inequality
	and \eqref{eq:levelbasic} give
	\begin{align}
		(h-k)^p a(h,r_{j+1},t_{j+1})
		&\le u(k,r_{j+1},t_{j+1})\notag\\
		&\le
		a(k,r_j,t_j)^{2/(n+2)}
		\left(
		\iint_{\cQ_{j+1}}
		v_k^{2(n+2)/n}\dmt
		\right)^{n/(n+2)}.
		\label{eq:levelmeasure}
	\end{align}
	Consequently,
	\begin{equation}\label{eq:levelrecurrence}
		\begin{aligned}
			a(h,r_{j+1},t_{j+1})
			&\le
			c4^j(h-k)^{-p}
			D_{\tau}^{q(1-1/\rho)+2}\Lambda^{q/\rho}
			a(k,r_j,t_j)^{\gamma_*},\\
			\gamma_*
			&=1+\frac2{n+2}-\frac1\rho
			=1+\frac1{n+2}>1.
		\end{aligned}
	\end{equation}
	
	If $\Lambda=0$, then \eqref{eq:low} and continuity
	imply $G_{\varepsilon,\sigma,+}\equiv0$ on the
	smaller cylinder. Hence we may assume $\Lambda>0$.
	
	Set
	\[
	a(k,j):=a(k,r_j,t_j),\qquad k_0=D_{\tau}^\sigma.
	\]
	Since the cylinders are nested, $a(k,j)$ is
	nonincreasing in both $k$ and $j$. Moreover,
	Chebyshev's inequality and \eqref{eq:low} yield
	\begin{equation}\label{eq:a0}
		a(k_0,0)
		\le k_0^{-p}
		\iint_{\cQ_0}G_{\varepsilon,\sigma,+}^p\dmt
		\le cD_{\tau}^{-q}\Lambda^q,
	\end{equation}
	where $\sigma p=q$.
	
	We may therefore apply Lemma~\ref{lem:stampacchia}
	to \eqref{eq:levelrecurrence}, with
	\[
	\omega=\gamma_*-1=\frac1{n+2},
	\qquad
	A=cD_{\tau}^{q(1-1/\rho)+2}\Lambda^{q/\rho}.
	\]
	The lemma gives
	\begin{equation}\label{eq:stampacchia-zero}
		a(k_0+d,\vartheta R_0,\tau)=0,
	\end{equation}
	provided
	\[
	\begin{aligned}
		d^p
		&\ge cA\bigl(D_{\tau}^{-q}\Lambda^q\bigr)^\omega\\
		&=cD_{\tau}^{q+2-2q/(n+2)}
		\Lambda^{2q/(n+2)}.
	\end{aligned}
	\]
	Thus we may take
	\begin{equation}\label{eq:d}
		d=cD_{\tau}^\sigma
		\left(
		D_{\tau}^{n+2-q}\Lambda^q
		\right)^{2/(p(n+2))}.
	\end{equation}
	By continuity of $G_{\varepsilon,\sigma,+}$,
	\eqref{eq:stampacchia-zero} implies
	\begin{equation}\label{eq:Gsigmabound}
		G_{\varepsilon,\sigma,+}
		\le
		D_{\tau}^\sigma
		\left[
		1+c\left(
		D_{\tau}^{n+2-q}\Lambda^q
		\right)^{2/(p(n+2))}
		\right]
	\end{equation}
	on $B_{\vartheta R_0}\times[\tau,T)$.
	
	Recalling \eqref{eq:Geps} and using
	$\Lambda^q\le100\cE_q$, we obtain
	\begin{equation}\label{eq:sublinearG}
		G
		\le\varepsilon H+
		cD_{\tau}^\sigma
		\left[
		1+
		\left(D_{\tau}^{n+2-q}\cE_q\right)^{2/(p(n+2))}
		\right]H^{1-\sigma}.
	\end{equation}
	Young's inequality and $\sigma p=q$ then give
	\begin{equation}\label{eq:Gfinal}
		G
		\le2\varepsilon H+
		c(n,\alpha,q,\varepsilon)D_{\tau}
		\left[
		1+
		\left(D_{\tau}^{n+2-q}\cE_q
		\right)^{2/(q(n+2))}
		\right].
	\end{equation}
	
\end{proof}

Next, we give the proof of Corollary~\ref{cor:collar}.
\begin{proof} [Proof of Corollary~\ref{cor:collar}]
	
	By translation and parabolic scaling, we may assume
	$z_0=(0,0)$ and $R=1$. Set
	\[
	\mathcal C=P_2\setminus P_\theta,
	\qquad
	\mathcal J=\iint_{\mathcal C}H^q\dmt\le K.
	\]
	Choose
	\[
	b_\theta=\frac{2+\theta}{2},\qquad
	h_\theta=\frac{4-\theta^2}{4},\qquad
	I_\theta=[-4+h_\theta,-4+2h_\theta].
	\]
	Since $1<\theta<2$, we have
	$\theta<b_\theta<2$ and
	$I_\theta\Subset(-4,-\theta^2)$.
	In particular,
	$
	B_{b_\theta}\times I_\theta\subset\mathcal C.
	$
	By the mean-value inequality for integrals, there exists
	$s\in I_\theta$ such that
	\begin{equation}\label{eq:goodSlice}
		\int_{M_s\cap B_{b_\theta}}H^q\,d\mu_s
		\le h_\theta^{-1}\mathcal J.
	\end{equation}
	Moreover, since
	\[
	(B_{b_\theta}\setminus B_\theta)
	\times[s,0]\subset\mathcal C,
	\]
	we have
	\begin{equation}\label{eq:goodAnnulus}
		\int_s^0
		\int_{M_t\cap(B_{b_\theta}\setminus B_\theta)}
		H^q\dmt
		\le\mathcal J.
	\end{equation}
	
	We apply Theorem~\ref{thm:main} to the time-translated
	flow $\widetilde X(x,t)=X(x,s+t)$, with
	\begin{equation}\label{eq:goodParameters}
		\widehat R=\sqrt{-2ns},\qquad
		\lambda=\frac{b_\theta}{\widehat R},\qquad
		\delta=\frac{b_\theta-\theta}{\widehat R},\qquad
		\vartheta=\frac1\theta,\qquad
		\tau=\frac1{4n}.
	\end{equation}
	The corresponding time interval has length
	$\widehat R^2/(2n)=-s$.
	Furthermore,
	\begin{equation}\label{eq:goodTarget}
		\vartheta(\lambda-\delta)\widehat R=1,
		\qquad
		s+\tau\widehat R^2=\frac{s}{2}
		\le-1-\frac{\theta^2}{4}<-1.
	\end{equation}
	
	Let $\widehat{\cE}_q$ be the integral quantity
	\eqref{eq:E} corresponding to the time-translated flow,
	with scale $\widehat R$ and parameters
	$\lambda,\delta$ given by \eqref{eq:goodParameters}.
	By \eqref{eq:goodSlice} and \eqref{eq:goodAnnulus},
	\begin{align}
		\widehat{\cE}_q
		&\le
		\widehat R^{q-n-2}
		\left[
		(-s)h_\theta^{-1}
		+\frac{\widehat R^2}{(b_\theta-\theta)^2}
		\right]\mathcal J\notag\\
		&\le C(n,q,\theta)\mathcal J,
		\label{eq:goodBudget}
	\end{align}
	where we used
	$
	\sqrt{5n}\le\widehat R\le\sqrt{8n}.
	$
	Furthermore, we have
	\begin{equation}\label{eq:goodD}
		\widehat D_{\tau}
		=1+2\sqrt n+\frac{\widehat R}{\theta-1}
		\le C(n,\theta).
	\end{equation}
	Consequently, Theorem~\ref{thm:main} yields Corollary \ref{cor:collar}.
	
\end{proof}

\section{Applications to ancient flows}\label{sec:areaEntropy}

In this section we prove Theorem~\ref{thm:areaAncient}. We begin with the following estimate.

\begin{lemma}\label{lem:areaDissipation}
	Let a smooth hypersurface mean curvature flow be properly defined in
	$P_{3R}(z_0)$, where $z_0=(x_0,t_0)$ and $R>0$. If
	$\iint_{P_{3R}(z_0)}\dmt<\infty$, then
	\begin{equation}\label{eq:areaDissipation}
		\iint_{P_{2R}(z_0)}H^2\dmt
		\le \frac{C(n)}{R^2}\iint_{P_{3R}(z_0)}\dmt.
	\end{equation}
	
\end{lemma}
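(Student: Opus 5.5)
The plan is to localize the area evolution \eqref{eq:area}, $\partial_t\,d\mu_t=-H^2\,d\mu_t$, with a spacetime cutoff, and then absorb the error produced by the cutoff. By translation and parabolic scaling we may take $z_0=(0,0)$ and $R=1$. Choose an ambient cutoff $\zeta(x,t)=\varphi(x)\eta(t)$ with the following properties:
\begin{itemize}
\item $\varphi\in C_c^\infty(B_3)$, $0\le\varphi\le1$, $\varphi=1$ on $B_2$, and $|D\varphi|+|D^2\varphi|\le c$;
\item $\eta$ vanishes for $t\le-9$, equals one for $t\ge-4$, and satisfies $0\le\eta'\le c$.
\end{itemize}
Set $\psi=\zeta(X,t)$. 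Since the flow is properly defined in $P_3$, $\psi$ has compact support on each time slice, so all integrals below are finite and the computation is legitimate.

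Using \eqref{eq:cutoff} and \eqref{eq:area}, we get
\[
\frac{d}{dt}\int\psi\,d\mu_t=\int\bigl((\partial_t-\Delta)\psi-H^2\psi\bigr)d\mu_t,
\]
because $\int\Delta\psi\,d\mu_t=0$ by compact support. We have $(\partial_t-\Delta)\psi=\varphi\eta'+\eta\bigl(\langle D\varphi,\HH\rangle-\operatorname{tr}_{TM_t}D^2\varphi\bigr)$. Here we must be careful: $\partial_t$ of $\varphi(X)$ equals $D\varphi\cdot\partial_tX=-H\langle D\varphi,\nu\rangle$, so the first-order term contains $H$. The cleanest way to handle this is to write
\[
\frac{d}{dt}\int\zeta\,d\mu_t=\int\bigl(\partial_t\zeta-H\langle D\zeta,\nu\rangle-H^2\zeta\bigr)d\mu_t,
\]
which requires no Laplacian at all. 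This is the first variation of area with normal velocity $-H\nu$.

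The term $H\langle D\zeta,\nu\rangle$ is the one obstacle, and we handle it with Young's inequality. Take $\zeta=\varphi^2\eta$. Then $|H\langle D\zeta,\nu\rangle|\le 2H\varphi|D\varphi|\eta\le\frac12H^2\varphi^2\eta+2|D\varphi|^2\eta$, so
\[
\frac{d}{dt}\int\varphi^2\eta\,d\mu_t+\frac12\int H^2\varphi^2\eta\,d\mu_t\le\int\bigl(\varphi^2\eta'+2|D\varphi|^2\eta\bigr)d\mu_t\le c\,\mu_t(M_t\cap B_3)\,\mathbf 1_{\{t\ge-9\}}.
\]
Now integrate from $-9$ to $0$ and discard the nonnegative endpoint term $\int\varphi^2\eta\,d\mu_0$ (the left endpoint term vanishes since $\eta(-9)=0$). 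Because $\varphi^2\eta=1$ on $P_2$, this yields
\[
\frac12\iint_{P_2}H^2\,d\mu_t\,dt\le c\iint_{P_3}d\mu_t\,dt.
\]
Undoing the scaling gives the factor $R^{-2}$, which proves \eqref{eq:areaDissipation}.

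A minor technical point: the statement concerns flows defined on the half-open interval $(t_0-9R^2,t_0]$. To avoid this issue, we integrate up to $t_0$ by taking the limit of integrals over $[-9+\epsilon,t]$, and we use the finiteness of $\iint_{P_3}d\mu_t\,dt$ to justify the monotone convergence.
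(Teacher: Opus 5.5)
Your argument is essentially the paper's proof. It uses the same cutoff $\varphi^2\eta$ (the paper's $\eta\chi^2$), the same first-variation identity for normal velocity $-H\nu$, the same Young absorption of $2\eta\varphi H\langle D\varphi,\nu\rangle$, and integration in time.

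There is one technical wrinkle, in the time cutoff. Since $P_3$ is open at $t=-9$, you should take $\eta=0$ for $t\le-8$, as the paper does, so that the initial term vanishes identically. With your choice, the $\epsilon$-limit leaves the positive boundary term $\eta(-9+\epsilon)\int\varphi^2\,d\mu_{-9+\epsilon}$, and monotone convergence does not remove it. It can be disposed of along a subsequence, using $\eta(-9+\epsilon)\le c\epsilon$ together with $\int_{-9}^0\mu_t(M_t\cap B_3)\,dt<\infty$, but the shifted cutoff avoids the issue entirely.

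Your first formula for $(\partial_t-\Delta)\psi$ is also incorrect: by \eqref{eq:cutoff} the $\langle D\varphi,\HH\rangle$ terms cancel. This is harmless, since the route you actually use is the direct first-variation formula.
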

\begin{proof}
	
	Choose a smooth spatial cutoff
	$\chi\in C_c^\infty(B_{3R}(x_0))$ such that
	\[
	0\le\chi\le1,\qquad
	\chi=1\ \text{on }B_{2R}(x_0),\qquad
	|D\chi|\le\frac{c}{R}.
	\]
	Let $\eta$ be a smooth nondecreasing time cutoff satisfying
	\[
	\begin{gathered}
		0\le\eta\le1,\qquad
		\eta=0\ \text{for }t\le t_0-8R^2,\\
		\eta=1\ \text{for }t\ge t_0-4R^2,\qquad
		0\le\eta'\le\frac{c}{R^2}.
	\end{gathered}
	\]
	Set
	\[
	\mathcal A(t)
	=\int_{M_t}\eta(t)\chi(X)^2\,d\mu_t.
	\]
	Using $\partial_tX=-H\nu$ and the area evolution
	\eqref{eq:area}, we compute
	\begin{align}
		\mathcal A'(t)
		&=\int_{M_t}\eta'\chi^2\,d\mu_t
		-2\int_{M_t}\eta\chi H
		\langle D\chi,\nu\rangle\,d\mu_t
		\notag\\
		&\quad-\int_{M_t}\eta\chi^2H^2\,d\mu_t.
		\label{eq:weightedAreaVariation}
	\end{align}
	Since
	\[
	2\chi|H||D\chi|
	\le\frac12\chi^2H^2+2|D\chi|^2,
	\]
	we obtain
	\begin{equation}\label{eq:areaDamped}
		\mathcal A'(t)
		+\frac12\int_{M_t}\eta\chi^2H^2\,d\mu_t
		\le
		\int_{M_t}
		\left(\eta'\chi^2+2\eta|D\chi|^2\right)d\mu_t.
	\end{equation}
	Integrating from $t_0-8R^2$ to $t_0$, and using
	$\mathcal A(t_0-8R^2)=0$ and $\mathcal A(t_0)\ge0$,
	we find
	\[
	\frac12\int_{t_0-8R^2}^{t_0}
	\int_{M_t}\eta\chi^2H^2\,d\mu_t\,dt
	\le
	\frac{c}{R^2}
	\iint_{P_{3R}(z_0)}\dmt.
	\]
	Since $\eta\chi^2=1$ on $P_{2R}(z_0)$,
	it follows that
	\[
	\iint_{P_{2R}(z_0)}H^2\dmt
	\le\frac{C(n)}{R^2}
	\iint_{P_{3R}(z_0)}\dmt,
	\]
	as claimed. 
	
\end{proof}

The area estimate and Corollary~\ref{cor:collar} give a local
pinching estimate in terms of spacetime area alone.

\begin{proposition}
	\label{prop:areaPinching}
	Let $X$ be a smooth mean curvature flow properly defined
	in $P_{3R}(z_0)$, satisfying
	\[
	\kappa_1+\cdots+\kappa_{m+1}\ge\alpha|A|
	\]
	throughout $P_{2R}(z_0)$. Define 
	\begin{equation}\label{eq:spacetimeArea}
		\mathcal V(z_0;R)
		:=R^{-n-2}\iint_{P_{3R}(z_0)}\dmt.
	\end{equation}
	
	If $\mathcal V(z_0;R)<\infty$, then for every
	$\varepsilon>0$,
	\begin{equation}\label{eq:areaPinching}
		\begin{aligned}
			\kappa_1
			&\ge-\varepsilon H
			-\frac{C}{R}
			\left(1+\mathcal V(z_0;R)^{1/(n+2)}\right),\\
			\sum_{j=m+1}^{n}(\kappa_n-\kappa_j)
			&\le\sum_{j=1}^{m}\kappa_j+\varepsilon H
			+\frac{C}{R}
			\left(1+\mathcal V(z_0;R)^{1/(n+2)}\right)
		\end{aligned}
	\end{equation}
	on $P_R(z_0)$, where $C=C(n,\alpha,\varepsilon)$.
\end{proposition}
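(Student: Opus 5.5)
Apply Corollary~\ref{cor:collar} with $q=2$ and some fixed $\theta\in(1,2)$, say $\theta=3/2$, and supply the needed $L^2$ bound on $H$ over the collar from Lemma~\ref{lem:areaDissipation}. The exponent then comes out right: for $q=2$ the corollary's error term is $K^{2/(q(n+2))}=K^{1/(n+2)}$, which is exactly the power of $\mathcal V$ in \eqref{eq:areaPinching}.

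\textbf{Step 1: checking the hypotheses of the corollary.} The flow is properly defined in $P_{3R}(z_0)\supset P_{2R}(z_0)$. The cone condition $\kappa_1+\cdots+\kappa_{m+1}\ge\alpha|A|$ holds throughout $P_{2R}(z_0)$ by assumption. These are exactly the structural hypotheses of Corollary~\ref{cor:collar}.

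\textbf{Step 2: bounding the collar quantity.} Since $\mathcal C_{\theta,R}(z_0)\subset P_{2R}(z_0)$, Lemma~\ref{lem:areaDissipation} gives
\[
\iint_{\mathcal C_{\theta,R}(z_0)}H^2\dmt\le\iint_{P_{2R}(z_0)}H^2\dmt\le\frac{C(n)}{R^2}\iint_{P_{3R}(z_0)}\dmt .
\]
The lemma applies because $\mathcal V(z_0;R)<\infty$ means the spacetime area of $P_{3R}(z_0)$ is finite. Multiplying by $R^{q-n-2}=R^{-n}$ yields
\[
\cC_{2,\theta}(z_0;R)\le C(n)R^{-n-2}\iint_{P_{3R}(z_0)}\dmt=C(n)\mathcal V(z_0;R)=:K<\infty .
\]

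\textbf{Step 3: concluding.} Corollary~\ref{cor:collar} now gives both inequalities \eqref{eq:interiorConvexity} and \eqref{eq:interiorCylindrical} on $P_R(z_0)$, with error term
\[
\frac{C}{R}\bigl(1+K^{1/(n+2)}\bigr)\le\frac{C'}{R}\bigl(1+\mathcal V(z_0;R)^{1/(n+2)}\bigr).
\]
Here $C'=C'(n,\alpha,\varepsilon)$: with $q=2$ and $\theta=3/2$ fixed, the corollary's constant depends only on $(n,\alpha,\varepsilon)$, and the factor $C(n)^{1/(n+2)}$ coming from $K$ is absorbed.

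There is no serious obstacle; the main point to get right is the scaling bookkeeping. The normalization $R^{q-n-2}$ in \eqref{eq:collarQuantity} with $q=2$ must match the factor $R^{-2}$ from Lemma~\ref{lem:areaDissipation} and the $R^{-n-2}$ in $\mathcal V$, and these do match as computed in Step 2. One should also check that Corollary~\ref{cor:collar} only needs the cone condition on $P_{2R}(z_0)$, which is what Proposition~\ref{prop:areaPinching} assumes, while properness is only needed on the larger cylinder $P_{3R}(z_0)$ for the area estimate.
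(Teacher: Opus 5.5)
Your proposal is correct and follows the paper's proof essentially verbatim. Like the paper, you use $\mathcal C_{3/2,R}(z_0)\subset P_{2R}(z_0)$ and Lemma~\ref{lem:areaDissipation} to get $\cC_{2,3/2}(z_0;R)\le C(n)\mathcal V(z_0;R)$, then apply Corollary~\ref{cor:collar} with $q=2$, $\theta=3/2$, absorbing the factor $C(n)^{1/(n+2)}$ into the constant.
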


\begin{proof}
	Since $\mathcal C_{3/2,R}(z_0)\subset P_{2R}(z_0)$,
	Lemma~\ref{lem:areaDissipation} yields
	\begin{align}
		\cC_{2,3/2}(z_0;R)
		&=R^{-n}\iint_{\mathcal C_{3/2,R}(z_0)}
		H^2\dmt\notag\\
		&\le C(n)R^{-n-2}
		\iint_{P_{3R}(z_0)}\dmt\notag\\
		&=C(n)\mathcal V(z_0;R).
		\label{eq:areaToCollar}
	\end{align}
	Applying Corollary~\ref{cor:collar} with
	$q=2$ and $\theta=3/2$, so that the exponent
	$2/[q(n+2)]=1/(n+2)$, gives \eqref{eq:areaPinching}
	after adjusting the constant.
\end{proof}

We apply this estimate at arbitrarily large scales to prove Theorem~\ref{thm:areaAncient}.

\begin{proof}[Proof of Theorem~\ref{thm:areaAncient}]
	
	Fix $p\in M$ and $t_0<0$, and set
	$z_0=(X(p,t_0),t_0)$. Since the flow is ancient,
	we may apply Proposition~\ref{prop:areaPinching}
	at arbitrarily large scales. By
	\eqref{eq:uniformAreaGrowth},
	\begin{equation}\label{eq:areaBudgetAncient}
		\begin{aligned}
			\mathcal V(z_0;R)
			&\le R^{-n-2}
			\int_{t_0-9R^2}^{t_0}\Lambda(3R)^n\,dt\\
			&=\cdot3^{n+2}\Lambda
		\end{aligned}
	\end{equation}
	for every $R>0$.
	Thus, for each $\varepsilon>0$,
	Proposition~\ref{prop:areaPinching} gives
	\begin{align*}
		\kappa_1(p,t_0)
		&\ge-\varepsilon H(p,t_0)
		-\frac{C}{R}
		\left(1+\Lambda^{1/(n+2)}\right),\\
		(n-m)\kappa_n(p,t_0)-H(p,t_0)
		&\le\varepsilon H(p,t_0)
		+\frac{C}{R}
		\left(1+\Lambda^{1/(n+2)}\right),
	\end{align*}
	where $C=C(n,\alpha,\varepsilon)$ is independent
	of $R$. Letting $R\to\infty$ and then
	$\varepsilon\downarrow0$, we obtain
	\[
	\kappa_1(p,t_0)\ge0,\qquad
	(n-m)\kappa_n(p,t_0)\le H(p,t_0).
	\]
	Since $(p,t_0)$ was arbitrary, these inequalities
	hold throughout the ancient flow. 
	This proves \eqref{eq:areaAncientCone}.
\end{proof}

\section{Local planarity in arbitrary codimension}\label{sec:planarity}

We prove Theorems~\ref{hc:thm:local} and~\ref{hc:cor:timePlanarity} and
Corollary~\ref{hc:thm:ancient} using the normal-valued notation
introduced above.

Fix the constants and the function $f$ from Lemma~\ref{hc:lem:input}.
The following calculation is contained in \cite[proof of Theorem~1.1,
equations~(2.6)--(2.9)]{LNZ25}.

\begin{lemma}\cite{LNZ25}\label{hc:lem:absorbing}
	 Let $F:M^n\times I\to\R^N$, $n\ge2$ and
	$N\ge n+1$, be a smooth mean curvature flow satisfying
\eqref{hc:eq:pinch} on the region under consideration.
On $\{f>0\}$ define
\begin{equation}\label{hc:eq:uw}
 u=\frac{|\Ah|^2}{f^{1-\sigma_0}},\qquad w=u^{1/\sigma_0},
\end{equation}
where $f$, $c_0$ and $\delta_0$ are defined in Lemma \ref{hc:lem:input}.
Extend $u$ and $w$ by zero on $\{f=0\}$. These extensions are continuous
and nonnegative. On $\{w>0\}$, the function $w$ is smooth and satisfies
\begin{equation}\label{hc:eq:absorbing}
 (\partial_t-\Delta) w\le-\beta w^2,
 \qquad \beta=\beta(n,\delta_0)>0.
\end{equation}
\end{lemma}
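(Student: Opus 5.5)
The computation is pointwise on $\{w>0\}=\{f>0,\ |\Ah|>0\}$. There $f>0$, so $u$ and $w$ are smooth, and Lemma~\ref{hc:lem:input} supplies \eqref{hc:eq:Lf} and \eqref{hc:eq:La}.

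\emph{Continuity of the extensions.} By \eqref{hc:eq:fBounds}, $f\ge(c_0-c_n+\delta_0)|\HH|^2$, so $|\HH|\to0$ wherever $f\to0$. The pinching condition \eqref{hc:eq:pinch} then gives
\[
|\Ah|^2\le|\Avec|^2\le(c_n-\delta_0)|\HH|^2\le Cf,
\]
hence $u\le Cf^{\sigma_0}\to0$. Thus the extensions of $u$ and $w$ by zero are continuous and nonnegative.

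\emph{The differential inequality.} I would write $u=|\Ah|^2f^{\sigma_0-1}$ and use the product rule for $\partial_t-\Delta$:
\begin{align*}
(\partial_t-\Delta)u&=f^{\sigma_0-1}(\partial_t-\Delta)|\Ah|^2-(1-\sigma_0)|\Ah|^2f^{\sigma_0-2}(\partial_t-\Delta)f\\
&\quad-(1-\sigma_0)(2-\sigma_0)|\Ah|^2f^{\sigma_0-3}|\nabla f|^2+2(1-\sigma_0)f^{\sigma_0-2}\langle\nabla|\Ah|^2,\nabla f\rangle.
\end{align*}
Substituting \eqref{hc:eq:La}, the first two terms combine to at most $-\sigma_0\frac{u}{f}(\partial_t-\Delta)f$.

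For the reaction part, \eqref{hc:eq:Lf} gives $(\partial_t-\Delta)f\ge\frac{2}{nc_0-1}|\Ah|^2f$. So the combined term is bounded by
\[
-\frac{2\sigma_0}{nc_0-1}\,u|\Ah|^2=-\frac{2\sigma_0}{nc_0-1}\,u^2f^{1-\sigma_0}.
\]
It is also worth keeping $-\sigma_0\frac{u}{f}\cdot2\bigl(1-\tfrac{c_0(n+2)}{3}\bigr)|\nablaN\Avec|^2$, which has a good sign since $c_0<c_n\le\frac{3}{n+2}$.

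The gradient terms come next. I would use the identity
\[
\nabla u=f^{\sigma_0-1}\nabla|\Ah|^2-(1-\sigma_0)uf^{-1}\nabla f
\]
to rewrite the cross term as $2(1-\sigma_0)f^{-1}\langle\nabla u,\nabla f\rangle$ plus a multiple of $u|\nabla f|^2/f^2$. The leftover pieces $u|\nabla f|^2/f^2$ and $|\nabla|\Ah|^2|^2$-type terms are controlled by $C\frac{u}{f}|\nablaN\Avec|^2$, using $|\nabla f|\le C|\HH||\nablaN\Avec|\le Cf^{1/2}|\nablaN\Avec|$. These get absorbed by the good $|\nablaN\Avec|^2$ term, provided $\sigma_0$ is small and one works with $\Delta$ of $w=u^{1/\sigma_0}$. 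This is the step I expect to be the main obstacle. It is exactly where LNZ's choice $\sigma_0\le1/8$ and the constants of \cite[(2.6)--(2.9)]{LNZ25} enter, and I would follow that bookkeeping.

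Passing to $w=u^{1/\sigma_0}$ gives
\[
(\partial_t-\Delta)w=\sigma_0^{-1}u^{1/\sigma_0-1}(\partial_t-\Delta)u-\sigma_0^{-1}(\sigma_0^{-1}-1)u^{1/\sigma_0-2}|\nabla u|^2.
\]
The large negative $|\nabla u|^2$ term absorbs the $\langle\nabla u,\nabla f\rangle/f$ cross term by Young's inequality, with the remainder going into the $|\nablaN\Avec|^2$ term. This leaves
\[
(\partial_t-\Delta)w\le-\frac{2}{nc_0-1}\,u^{1/\sigma_0+1}f^{1-\sigma_0}.
\]

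\emph{Making the right-hand side a multiple of $w^2$.} This is where I need care: $u^{1/\sigma_0+1}f^{1-\sigma_0}$ must be bounded below by $\beta w^2=\beta u^{2/\sigma_0}$. Equivalently I need $u^{1/\sigma_0-1}\le Cf^{1-\sigma_0}$. I would try to deduce this from the pinching bound $u\le Cf^{\sigma_0}$, which gives $u^{1/\sigma_0-1}\le Cf^{1-\sigma_0}$ with $C$ depending only on $(n,\delta_0)$. That yields $(\partial_t-\Delta)w\le-\beta w^2$ with $\beta=\beta(n,\delta_0)>0$, which is \eqref{hc:eq:absorbing}.
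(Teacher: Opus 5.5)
\textbf{The gradient step is a genuine gap.} Your overall structure matches the paper: the quotient rule for $u$, substitution of \eqref{hc:eq:La}, passage to $w=u^{1/\sigma_0}$, and a reaction term from \eqref{hc:eq:Lf}. Your last step is a valid variant of the paper's. You use the $|\Ah|^2/(nc_0-1)$ reaction term together with $u^{1/\sigma_0-1}\le K^{1/\sigma_0-1}f^{1-\sigma_0}$. The paper instead uses $|h|^2\ge|\HH|^2/n\ge f/(nc_0)$ and $w\le K^{1/\sigma_0}f$. Both give $\beta=\beta(n,\delta_0)$.

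The gradient step, however, is not carried out. You defer it to the bookkeeping of \cite{LNZ25} and propose absorbing leftover $u|\nabla f|^2/f^2$ terms into the $|\nablaN\Avec|^2$ term for small $\sigma_0$. That mechanism does not close:
\begin{itemize}
\item After passing to $w$, the good term is $2\bigl(1-\tfrac{c_0(n+2)}3\bigr)u^{1/\sigma_0}|\nablaN\Avec|^2/f$, whose coefficient is less than $2$.
\item The available bound $|\nabla f|^2\le Cf|\nablaN\Avec|^2$ has $C$ of order $(c_0-c_n+\delta_0)^{-1}$, so the comparison fails in general.
\item Shrinking $\sigma_0$ changes neither coefficient, so smallness of $\sigma_0$ does not help.
\end{itemize}
Two smaller points: no $|\nabla|\Ah|^2|^2$ terms appear in your own expansion, and $\sigma_0\le 1/8$ plays no role in this lemma; it is used only inside \eqref{hc:eq:La}.

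\textbf{The missing observation is that nothing needs absorbing.} The net coefficient of $u|\nabla\log f|^2$ in your expansion is $-\sigma_0(1-\sigma_0)$, which has a good sign:
\begin{align*}
(\partial_t-\Delta)u&=f^{\sigma_0-1}(\partial_t-\Delta)|\Ah|^2-(1-\sigma_0)\frac uf(\partial_t-\Delta)f\\
&\quad+2(1-\sigma_0)\langle\nabla u,\nabla\log f\rangle-\sigma_0(1-\sigma_0)u|\nabla\log f|^2.
\end{align*}
Multiply by $\sigma_0^{-1}u^{1/\sigma_0-1}$ and add $-\sigma_0^{-1}(\sigma_0^{-1}-1)u^{1/\sigma_0-2}|\nabla u|^2$. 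The three resulting gradient terms combine exactly into
\[
-\frac{1-\sigma_0}{\sigma_0^2}\,u^{1/\sigma_0-2}\bigl|\nabla u-\sigma_0u\nabla\log f\bigr|^2\le0 .
\]
Equivalently, Young's inequality with the sharp weight leaves a remainder that the $-\sigma_0(1-\sigma_0)u|\nabla\log f|^2$ term cancels exactly. Hence
\[
(\partial_t-\Delta)w\le-\frac wf(\partial_t-\Delta)f\quad\text{on }\{w>0\},
\]
with no use of the $|\nablaN\Avec|^2$ term and no smallness of $\sigma_0$. With this identity in place, your final paragraph completes the proof.
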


\begin{proof}
By \eqref{hc:eq:fBounds},
\[
 |\Ah|^2\le K f,\qquad K=\frac{c_0}{c_0-c_n+\delta_0}.
\]
Hence
\begin{equation}\label{hc:eq:extension}
 0\le u\le Kf^{\sigma_0},\qquad
 0\le w\le K^{1/\sigma_0}f.
\end{equation}
This proves continuity for $u$ and $w$ at $f=0$. At a point where $w>0$, both $|\Ah|^2$ and
$f$ are positive.
We calculate that
\begin{align}
 (\partial_t-\Delta)u
 &=f^{\sigma_0-1}(\partial_t-\Delta)|\Ah|^2
   +(\sigma_0-1)\frac{u}{f}(\partial_t-\Delta)f\notag\\
 &\quad+2(1-\sigma_0)\langle\nabla u,\nabla\log f\rangle
   -\sigma_0(1-\sigma_0)u|\nabla\log f|^2.
 \label{hc:eq:quotient}
\end{align}
By \eqref{hc:eq:La},
\begin{align*}
 (\partial_t-\Delta)u
 &\le-\sigma_0\frac{u}{f}(\partial_t-\Delta)f
      +2(1-\sigma_0)\langle\nabla u,\nabla\log f\rangle\\
 &\quad-\sigma_0(1-\sigma_0)u|\nabla\log f|^2.
\end{align*}
It follows that
\begin{equation}\label{hc:eq:square}
 \begin{aligned}
 (\partial_t-\Delta)w
 &\le-\frac{w}{f}(\partial_t-\Delta)f\\
 &\quad-\frac{1-\sigma_0}{\sigma_0^2}u^{1/\sigma_0-2}
       |\nabla u-\sigma_0u\nabla\log f|^2\\
 &\le-\frac{w}{f}(\partial_t-\Delta)f.
 \end{aligned}
\end{equation}
Moreover $|h|^2\ge|\HH|^2/n$, so \eqref{hc:eq:Lf},
\eqref{hc:eq:fBounds} and \eqref{hc:eq:extension} give
\[
 \frac{(\partial_t-\Delta) f}{f}\ge2|h|^2\ge\frac2n|\HH|^2
 \ge\frac{2}{nc_0}f
 \ge\frac{2}{nc_0K^{1/\sigma_0}}w.
\]
Thus \eqref{hc:eq:absorbing} holds with
$\beta=2/(nc_0K^{1/\sigma_0})$.
\end{proof}

\begin{proof}[Proof of Theorem~\ref{hc:thm:local}]
The assertion is immediate when $N=n+1$, since $|\Ah|^2=0$. For
$N\ge n+2$, we consider the function $w$ from Lemma~\ref{hc:lem:absorbing}.
Write $a_0=t_0-R^2$ and choose $\varphi\in C_c^\infty(B_R(y))$ with
\[
 0\le\varphi\le1,\qquad \varphi=1\ \text{on }B_{R/2}(y),\qquad
 |D\varphi|\le C R^{-1},\quad |D^2\varphi|_{\mathrm{op}}\le C R^{-2}.
\]
Choose a smooth nondecreasing function $\vartheta:\R\to[0,1]$ satisfying
\[
 \vartheta=0\ \text{on }(-\infty,a_0],\qquad
 \vartheta=1\ \text{on }[a_0+R^2/2,\infty),\qquad
 |\vartheta'|\le C R^{-2}.
\]
Set $\psi=\varphi(F)\vartheta(t)$.
For a smooth ambient function $\zeta(x,t)$ and its pullback
$\psi(p,t)=\zeta(F(p,t),t)$, the identities $\partial_tF=\Delta F=\HH$
give
\begin{equation}\label{hc:eq:cutoff}
	(\partial_t-\Delta)\psi
	=\bigl(\partial_t\zeta-\tr_{TM_t}D_x^2\zeta\bigr)(F,t).
\end{equation}
Hence,
\begin{equation}\label{hc:eq:cutoffBound}
 |\psi(\partial_t-\Delta)\psi|+|\nabla\psi|^2\le C(n)R^{-2}.
\end{equation}
Let the function $W=\psi^2w$. Properness on the closed cylinder makes $W$'s support on
$[a_0,t_0]$ compact. Notice that  $W=0$ at $t=a_0$. If $W$'s maximum is zero, the estimate
follows. Otherwise it has a positive maximum at some $(p_*,t_*)$ with
$t_*>a_0$, $\psi>0$ and $w>0$. At that point,
\[
 \nabla w=-2w\frac{\nabla\psi}{\psi},\qquad (\partial_t-\Delta)W\ge0.
\]
 By \eqref{hc:eq:absorbing},
\begin{align}
 0\le(\partial_t-\Delta)W
 &\le-\beta\psi^2w^2+2\psi w(\partial_t-\Delta)\psi+6w|\nabla\psi|^2
 \notag\\
 &\le-\beta\psi^2w^2+C(n)R^{-2}w.
 \label{hc:eq:maxcalc}
\end{align}
It follows that $\max W\le C(n)/(\beta R^2)$.
On $B_{R/2}(y)\times[t_0-R^2/2,t_0]$ one has $\psi=1$, and therefore
\[
 w\le C(n,\delta_0)R^{-2}.
\]
At $f=0$ the function $w$ vanishes. Since $|\Ah|^2=w^{\sigma_0} f^{1-\sigma_0}$ on $\{f>0\}$ and
$f\le c_0|\HH|^2$, this proves \eqref{hc:eq:local}. 
\end{proof}

\begin{proof}[Proof of Theorem~\ref{hc:cor:timePlanarity}]
Fix $p\in M$ and $0<t<T$, and set $y=F(p,t)$ and $R=\sqrt{t/2}$.
The cylinder $\overline{B_R(y)}\times[t/2,t]$ satisfies the hypotheses
of Theorem~\ref{hc:thm:local}. Applying that theorem at $(p,t)$ gives
\begin{equation}\label{hc:eq:timeFromLocal}
 \begin{aligned}
  |\Ah|^2(p,t)
  &\le C R^{-2\sigma_0}|\HH|^{2-2\sigma_0}(p,t)\\
  &=2^{\sigma_0}C t^{-\sigma_0}|\HH|^{2-2\sigma_0}(p,t).
 \end{aligned}
\end{equation}
Absorbing $2^{\sigma_0}$ into $C$ proves \eqref{hc:eq:timePlanarity}
for $t\le T$.

By Lemma~\ref{hc:lem:input}, $f\ge0$ and $(\partial_t-\Delta)f\ge0$.
If $|\HH|(p_*,t_*)=0$ for some $t_*>0$, the strong minimum principle
gives $f=0$ on $M\times[0,T]$, hence $\Avec=\HH=0$. 
\end{proof}

\begin{proof}[Proof of Corollary~\ref{hc:thm:ancient}]
If $\Avec\equiv0$, then $\HH\equiv0$, then the submanifold is a static affine $n$-plane. Otherwise  $|\HH|>0$ at any time. Applying \eqref{hc:eq:timePlanarity} with initial time $s<t$
and letting $s\to-\infty$ gives $\Ah\equiv0$.
Put $c=c_n-\delta_0<1$. Since
$|h|\le|\Avec|\le\sqrt c\,|\HH|$,
\begin{equation}\label{hc:eq:naffCriterion}
 |\HH|g-h\ge(1-\sqrt c)|\HH|g>0.
\end{equation}
Thus \cite[Proposition~2.5]{Naff22} (also see
\cite[Section~1.1]{LNZ25}) places each slice in an affine
$(n+1)$-plane.
Its affine span cannot have dimension $n$, as that would give
$\HH=0$, so its dimension is exactly $n+1$; denote it by $\Pi_t$.

Notice that in the spacetime-proper setting, affine containment is preserved:
\begin{equation}\label{hc:eq:affinePreservation}
	F(M,s)\subset y+E
	\quad\Longrightarrow\quad
	F(M,t)\subset y+E,\qquad s\le t\le b,
\end{equation}
where $E\subset\R^N$ is a linear subspace. Indeed, for any
unit vector $e_0\in E^\perp$, set
\[
v=\langle F-y,e_0\rangle,\qquad
\Phi=1+|F-y|^2+2n(t-s).
\]
Since $(\partial_t-\Delta)v=(\partial_t-\Delta)\Phi=0$,
the functions
\[
Z_\pm=\pm v-\epsilon\Phi-\kappa(t-s),
\qquad \epsilon,\kappa>0,
\]
satisfy
\[
(\partial_t-\Delta)Z_\pm=-\kappa,
\qquad Z_\pm(\cdot,s)<0.
\]
 Notice that $
Z_\pm\le |F-y|-\epsilon(1+|F-y|^2)
$ , we can use the  maximum principle (c.f. Theorem 1.2 in \cite{Ma18}) to get $Z_\pm\le0$.
Letting $\kappa\downarrow0$ and $\epsilon\downarrow0$
gives $v=0$, proving \eqref{hc:eq:affinePreservation}.
This proving the corollary.
\end{proof}

\end{document}